\documentclass{amsart}

\usepackage{amsmath, amssymb}

\newtheorem{theorem}{Theorem}
\newtheorem{lemma}[theorem]{Lemma}

\newcommand{\be}{\begin{equation}}
\newcommand{\ee}{\end{equation}}

\newcommand{\uv}{\mathbf{u}}
\newcommand{\sigmav}{\boldsymbol{\sigma}}
\newcommand{\tauv}{\boldsymbol{\tau}}
\newcommand{\nv}{\mathbf{n}}
\newcommand{\vv}{\mathbf{v}}
\newcommand{\fv}{\mathbf{f}}
\newcommand{\zv}{\mathbf{z}}
\newcommand{\xv}{\mathbf{x}}
\newcommand{\tv}{\mathbf{t}}

\newcommand{\sigmat}{\underline{\boldsymbol{\sigma}}}
\newcommand{\taut}{\underline{\boldsymbol{\tau}}}
\newcommand{\epst}{\underline{\boldsymbol{\varepsilon}}}

\newcommand{\Atel}{{\underline{\mathbf{C}}^{-1}}}
\newcommand{\Ctel}{\underline{\mathbf{C}}}
\newcommand{\Ft}{\underline{\mathbf{F}}}
\newcommand{\St}{\underline{\mathbf{S}}}

\newcommand{\tang}{{\mathbf{t}}}
\newcommand{\triang}{\mathcal{T}}
\newcommand{\faces}{\mathcal{F}}

\newcommand{\GammaD}{\Gamma_D}
\newcommand{\GammaN}{\Gamma_N}

\newcommand{\spaceSigma}{\underline{\mathbf{\Sigma}}}
\newcommand{\spaceW}{W}
\newcommand{\spaceV}{\mathbf{V}}

\newcommand{\opB}{B}
\newcommand{\bilA}{a}
\newcommand{\bilB}{b}

\newcommand{\opdiv}{\operatorname{div}}
\newcommand{\opdivv}{\mathbf{{div}}}
\newcommand{\opcurl}{\mathbf{curl}}
\newcommand{\opker}{\operatorname{Ker}}
\newcommand{\nablat}{\underline{\nabla}}

\newcommand{\IntV}{\mathcal{I}_{\spaceV}}
\newcommand{\IntW}{\mathcal{I}_{\spaceW}}
\newcommand{\IntSigma}{\mathcal{I}_{\spaceSigma}}
\newcommand{\Clement}{\mathcal{C}}

\newcommand{\Lv}{{\mathbf{L}}}
\newcommand{\Hv}{{\mathbf{H}}}
\newcommand{\Lt}{{\underline{\mathbf{L}}}}
\newcommand{\Ct}{{\underline{\mathbf{C}}}}
\newcommand{\Ht}{{\underline{\mathbf{H}}}}
\newcommand{\Pv}{{\mathbf{P}}}
\newcommand{\Pt}{{\underline{\mathbf{P}}}}

\begin{document}


\title{An analysis of the TDNNS method using natural norms}

\author{Astrid S.\ Pechstein}
\address{Astrid. S.\ Pechstein\\ Institute of Technical Mechanics\\ Johannes Kepler University Linz\\ Altenbergerstr. 69\\ 4040 Linz, Austria}
\email{astrid.pechstein@jku.at}
\author{Joachim\ Sch\"oberl}
\address{Joachim\ Sch\"oberl\\ Institute for Analysis and Scientific Computing\\ Vienna University of Technology\\ Wiedner Hauptstrasse 8-10\\ 1040 Wien, Austria}
\email{joachim.schoeberl@tuwien.ac.at}


\date{\today}

\begin{abstract}
The Tangential-Displacement Normal-Normal-Stress (TDNNS) method is a finite element method for mixed elasticity. As the name suggests, the tangential component of the displacement vector as well as the normal-normal component of the stress are the degrees of freedom of the finite elements. The TDNNS method was shown to converge of optimal order, and to be robust with respect to shear and volume locking. However, the method is slightly nonconforming, and an analysis with respect to the natural norms of the arising spaces was still missing.
We present a sound mathematical theory of the infinite dimensional problem using the space $\Hv(\opcurl)$ for the displacement. We define the space $\Ht(\opdiv\opdivv)$ for the stresses and provide trace operators for the normal-normal stress. Moreover, the finite element problem is shown to be stable with respect to the $\Hv(\opcurl)$ and a discrete $\Ht(\opdiv\opdivv)$ norm. A-priori error estimates of optimal order with respect to these norms are obtained.

\keywords{Elasticity \and Mixed Problem \and Finite Elements \and Tangential-Displacement-Normal-Normal-Stress}

\end{abstract}

\maketitle

\section{Introduction} \label{sec:intro}

In \cite{PechsteinSchoeberl:11}, we introduced the TDNNS method for treating the problem of linear elasticity. The TDNNS method is a finite element method that uses tangential-continuous elements for the displacements and symmetric normal-normal continuous finite elements for the stresses. We showed that the TDNNS method is capable of overcoming shear locking \cite{PechsteinSchoeberl_anis:11} and volume locking \cite{Sinwel:09}.

However, the TDNNS method is slightly nonconforming, as the stress finite elements are not in the infinite-dimensional distributional space $\Ht(\opdiv\opdivv)$, which was introduced in \cite{PechsteinSchoeberl:11}. The analysis of TDNNS finite elements provided in our former work \cite{PechsteinSchoeberl:11,PechsteinSchoeberl_anis:11} is based on discrete, broken norms rather than the natural norms of the infinite-dimensional spaces $\Hv(\opcurl)$ and $\Ht(\opdiv\opdivv)$. In the present paper, we want to provide an analysis based on the natural norms of the Sobolev spaces. This analysis takes the fact that the stress space is nonconforming into account, and leads to optimal order a-priori error estimates.

\subsection{Notation}
We shortly present the notation used throughout the paper: Vectors shall be denoted as boldface (e.g.\ $\uv$), while tensors are boldface and underlined (e.g.\ $\sigmat$). On the boundary of some domain $A$, we use the outer normal vector $\nv$. For a vector field $\uv$, $u_n = \uv \cdot \nv$ is the normal component, and $\uv_\tang = \uv - u_n \nv$ is the tangential component. For a tensor field $\sigmat$, let $\sigmav_{\nv} = \sigmat \nv$ be the normal component, which is further split into its normal-normal component $\sigma_{nn} = (\sigmat \nv) \cdot \nv$ and its normal-tangential component $\sigmav_{\nv\tang} = \sigmav_\nv - \sigma_{nn} \nv$. 

Gradient, curl and divergence operators $\nabla$, $\opcurl$ and $\opdiv$ operators are defined in the usual way.
The gradient $\nablat$ of a vector field is a tensor containing in each row the gradient of the corresponding vector component. 
The divergence $\opdivv$ of a tensor field is a vector, where each component is the divergence of the corresponding row of the tensor. 

On some domain $A$, we use the Lebesgue space $L^2(A)$ and the standard Sobolev space $H^1(A)$ of weakly differentiable $L^2$ functions with gradient in $L^2(A)$. To indicate vector or tensor valued spaces, we use $\Lv^2(A)$, $\Hv^1(A)$ and $\Lt^2(A), \Ht^1(A)$, respectively. The space of tensor-valued symmetric functions with components in $L^2(A)$ is denoted as $\Lt^2_{sym}(A)$. The space of smooth functions on the closure $\bar A$ is denoted as $C^\infty(\bar A)$, and $C^\infty_{0,\Gamma}(\bar A)$ is the subspace where all derivatives vanish on the boundary part $\Gamma$ of $A$. If the domain of interest $\Omega$ is concerned, it may be omitted, writing e.g.\ $H^1$ for $H^1(\Omega)$.

On the boundary of a domain we use differential operators and spaces as introduced in the work of Buffa and Ciarlet \cite{BuffaCiarlet:01a,BuffaCiarlet:01b}. For the exact definitions, we refer to their work. We will mostly need the rather well-known trace space $H^{1/2}(\partial A)$ and the spaces $H^{1/2}(\Gamma), H^{1/2}_{00}(\Gamma)$ on a part $\Gamma$ of the boundary, where the latter can be extended by zero to the whole boundary space $H^{1/2}(\partial A)$.

\subsection{Problem geometry} \label{sec:domain}
Throughout the paper, we assume the domain of interest $\Omega \subset \mathbb R^3$ to be a bounded, connected, polyhedral domain with Lipschitz boundary $\partial \Omega$. Note that all results can directly be transferred to the two-dimensional case.

The (closed) polygonal faces of the polyhedral domain $\Omega$ shall be denoted by $\Gamma_i$ with $i \in \mathcal{I}$ and $\mathcal{I}$ a suitable index set. Different boundary conditions will be prescribed different parts of the boundary $\partial \Omega$. To this end, the boundary is divided into two closed parts $\Gamma_D$ and $\Gamma_N = \partial \Omega \backslash \mathrm{int}(\Gamma_D)$. The boundary part $\Gamma_D$, where the displacement will be prescribed (Dirichlet boundary condition), shall be non-trivial, whereas the boundary part $\Gamma_N$, where surface tractions are given (Neumann boundary condition), may vanish. 

We assume that both $\Gamma_D$ and $\Gamma_N$ are aligned with the boundary faces $\Gamma_i$, such that they each are a union of boundary faces,
\begin{equation}
	\Gamma_D = \bigcup_{i \in \mathcal{I}_D} \Gamma_i,\qquad \Gamma_N = \bigcup_{i \in \mathcal{I}_N} \Gamma_i.
\end{equation} 

In accordance with \cite{HiptmairZheng:09} we assume that for each connected component of the Dirichlet boundary $\Gamma_{D,i}$ we can find an open Lipschitz domain $\Omega_{D,i} \subset \mathbb R^3$ such that
\begin{equation}
	\overline \Omega_{D,i} \cap \overline \Omega = \Gamma_{D,i},\qquad \Omega_{D,i} \cap \Omega = \emptyset.
\end{equation} 
Moreover, $\Omega_{D,i}$ and $\Omega_{D,j}$ have positive distance for $i\not = j$, and the interior of $\overline \Omega \cup \bigcup \overline \Omega_{D,i}$ is Lipschitz.

\subsection{Linear elasticity}
Let $\uv: \Omega \to \mathbb R^3$ be the displacement vector. In linear elasticity, we use the linearized strain tensor
\begin{equation}
	\epst(\uv) = \frac12 \left( \nablat \uv + \nablat \uv^T\right).
\end{equation}
We are interested in finding displacement vector $\uv$ and symmetric stress tensor $\sigmat$ satisfying
\begin{align}
	\Atel \sigmat &= \epst(\uv) & \mathrm{in}\ \Omega, \label{eq:Hooke}\\
	-\opdivv \sigmat &= \fv & \mathrm{in}\ \Omega. \label{eq:equilibrium}
\end{align}
Hooke's law \eqref{eq:Hooke}  connects strain and stress tensor by the compliance tensor  $\Atel$, which is the inverse of the standard elasticity tensor $\Ctel$ depending on Young's modulus $E$ and the Poisson ratio $\nu$. We assume that Young's modulus $E$ is bounded, and the Poisson ratio $\nu$ is bounded away from 1/2, such that both $\Ctel$ and $\Atel$ exist and lie in $\Lt^{\infty}(\Omega)$. Equation \eqref{eq:equilibrium} is the equilibrium condition. 

We assume that all boundary conditions are prescribed on the boundary parts $\Gamma_D$ and $\Gamma_N$ introduced above. The displacement shall be given on $\Gamma_D$, while tractions are given on $\Gamma_N$,
\begin{align}
	\uv &= \uv_D & & \mathrm{on}\ \GammaD, \label{eq:boundarycondD} \\
	\sigmav_\nv &= \tv_N & & \mathrm{on}\ \GammaN \label{eq:boundarycondN}.
\end{align}

\subsection{Motivation of the TDNNS method}

Two different variational formulations are widely known for the partial differential equations  \eqref{eq:Hooke}, \eqref{eq:equilibrium}. Most standard finite element methods rely on a primal formulation, where the stress tensor $\sigmat$ is eliminated. In this formulation, the displacement boundary condition on $\Gamma_D$ is essential, and usually treated by a homogenization approach. To this end, it is necessary to have the existence of an extension $\tilde \uv_D \in \Hv^1(\Omega)$  of the boundary displacement $\uv_D$ to the whole domain $\Omega$. Then one searches for $\uv \in \tilde \uv_D + \Hv^1_{0,\GammaD}(\Omega)$ with the space $\Hv^1_{0,\GammaD}(\Omega) = \{ \vv \in \Hv^1(\Omega): \vv = 0\ \mbox{on}\ \GammaD\}$ satisfying the homogeneous displacement boundary condition,
\begin{equation}
	\int_\Omega \Ctel \epst(\uv) : \epst(\vv) \, d\xv = \int_\Omega \fv \cdot \vv\, d\xv + \int_{\Gamma_N} \tv_N \cdot \vv\,ds\qquad \forall \vv \in \Hv^1_{0,\GammaD}(\Omega). \label{eq:variationalH1}
\end{equation}
To be conforming, the displacement finite element space has to be continuous.

On the other hand, a dual Hellinger-Reissner formulation can be obtained from system \eqref{eq:Hooke}, \eqref{eq:equilibrium}. 
Integration by parts puts all continuity assumptions to the stress tensor. It has to allow for a weak divergence, while only $\Lv^2$ regularity is required for $\uv$. In this case, the traction boundary conditions are essential. One needs an extension of the surface tractions $\tv_N$ to the domain, a tensor field $\tilde \sigmat_N \in \Ht_{sym}(\opdivv)$ with $\tilde \sigmav_{N,\nv} = \tv_N$ on $\Gamma_N$. Inhomogeneous displacement boundary conditions can be included in weak form into the right hand side of equation \eqref{eq:variationalHdivI}.

One searches for $\sigmat \in \tilde \sigmat_N + \Ht_{sym,0,\GammaN}(\opdivv)$ with the space $ \Ht^{sym}_{0,\GammaN}(\opdivv) = \{\sigmat \in \Lt^2_{sym}(\Omega): \opdivv \sigmat \in \Lv^2(\Omega), \sigmav_\nv = 0 \ \mbox{on}\ \GammaN\}$ satisfying the homogeneous traction boundary condition and $\uv \in \Lv^2(\Omega)$ such that
\begin{align}
	\int_\Omega \Atel \sigmat : \taut\, d\xv + \int_\Omega \opdivv \taut \cdot \uv\, d\xv &= \int_{\Gamma_D} \uv_D \cdot \tauv_\nv\,ds && \forall \taut \in \Ht^{sym}_{0,\GammaN}(\opdivv), \label{eq:variationalHdivI}\\
	\int_\Omega \opdivv(\sigmat) \cdot \vv\, d\xv &= -\int_\Omega \fv \cdot \vv\, d\xv && \forall \vv \in \Lv^2(\Omega). \label{eq:variationalHdivII}
\end{align}
To define a conforming finite element method, one has to provide stress elements which are symmetric and normal continuous. Such elements have been found \cite{ArnoldWinther:02,AdamsCockburn:05,ArnoldAwanouWinther:08}, but come only at high computational costs, as they involve at least 24 degrees of freedom per element in two dimensions or 162 in three dimensions.

The TDNNS formulation is in between the primal and the dual concept. We want to design a formulation, where the tangential component $\uv_\tang$ of the displacement and the normal component $\sigma_{nn}$ of the normal stress vector are essential boundary conditions on the respective boundary parts $\Gamma_D$ and $\Gamma_N$. In other words, the displacement space has to allow for the definition of a tangential trace, while the stress space allows a normal-normal trace. It will turn out that the displacement space is the space $\Hv_{0,\Gamma_D}(\opcurl)$ satisfying zero tangential boundary conditions on $\Gamma_D$. 

Below, we formally write the variational formulation. It is of the standard mixed form treated in \cite{BoffiBrezziFortin:13}. We use the stress space $\spaceSigma$ and displacement space $\spaceV$, which will be rigorously defined in Section~\ref{sec:variational}. Currently, we only state that $\vv \in \spaceV$ implies $\vv_\tang = 0$ on $\Gamma_D$ and $\taut \in \spaceSigma$ implies $\tau_{nn} = 0$ on $\Gamma_N$. Accordingly, we need two extensions, one for the tangential component of the displacement and one for the normal-normal component of the stress: on the Dirichlet boundary $\Gamma_D$ some $\tilde \uv_{D}$ with $\tilde \uv_{D,\tang} = \uv_{D,\tang}$  and on the Neumann boundary $\Gamma_N$ some $\tilde \sigmat_N$ with $\tilde \sigma_{N,nn} = t_{N,n}$. We want to find $\uv$ in $\tilde \uv_D + \spaceV$ and $\sigmat \in \tilde \sigmat_N + \spaceSigma$ such that
\begin{align}
	\bilA(\sigmat,\taut) + \bilB(\taut, \uv) &= \int_{\Gamma_D} u_{D,n} \tau_{nn}\,ds && \forall \taut \in \spaceSigma, \label{eq:spp1}\\
	\bilB(\sigmat, \vv) &= -\int_\Omega \fv \cdot \vv\, d\xv - \int_{\Gamma_N} \tv_{N,\tang} \cdot \vv_\tang\,ds&& \forall \vv \in \spaceV \label{eq:spp2}.
\end{align}
For smooth functions, the bilinear forms $\bilA(\cdot,\cdot)$ and $\bilB(\cdot,\cdot)$ are given by
\begin{align}
	\bilA(\sigmat,\taut) &= \int_\Omega \Atel \sigmat : \taut\, d\xv, \label{eq:defineA} \\
	\bilB(\taut, \vv)    &= - \int_{\Omega} \epst(\vv) : \taut\, d\xv + \int_{\partial \Omega} \tau_{nn} v_n\, ds. \label{eq:defineBcontI} \\
		&= \int_{\Omega} \opdivv\taut \cdot vv\, d\xv - \int_{\partial \Omega} \tauv_{\nv\tang}\vv_\tang\, ds. \label{eq:defineBcontII}
\end{align}
In Section~\ref{sec:variational} we will define the function spaces and give a precise meaning to the arising integrals and bilinear forms in a distributional setting. We will determine in which way the boundary terms have to be understood. We will see that the bilinear form $\bilB(\cdot,\cdot)$ corresponds to a distributional divergence operator.

\section{The variational formulation of the TDNNS method} \label{sec:variational}

We shall specify the spaces, in which the variational formulation \eqref{eq:spp1} - \eqref{eq:spp2} is posed. While the displacement space is well-known, the stress space was introduced in \cite{PechsteinSchoeberl:11} and shall be analysed in detail in this work. 

\subsection{The displacement space $\Hv(\opcurl)$}

We use the space
\begin{equation}
	\Hv(\opcurl) = \left\{ \vv \in \Lv^2(\Omega): \opcurl \vv \in \Lv^2(\Omega)\right\}.
\end{equation}
This is a Hilbert space equipped with inner product and induced norm
\begin{equation}
	(\uv,\vv)_{\Hv(\opcurl)} = \int_{\Omega} (\uv \cdot \vv+ \opcurl \uv \cdot \opcurl \vv)\, d\xv,\ \|\uv\|_{\Hv(\opcurl)}^2 = (\uv,\uv)_{\Hv(\opcurl)}.  \label{eq:defnormdisp}
\end{equation}

It is well known, that the space $\Hv(\opcurl)$ allows for the definition of a tangential trace. 
According to \cite{BuffaCiarlet:01b}, we may define the subspace of $\Hv(\opcurl)$ satisfying homogeneous tangential boundary conditions on $\Gamma_D$, which is our TDNNS displacement space
\begin{equation}
	\spaceV := \Hv_{0,\GammaD}(\opcurl) = \left\{ \vv \in \Lv^2(\Omega): \opcurl \vv \in \Lv^2(\Omega), \vv_\tang = 0\ \mbox{on}\ \GammaD \right\}.
\end{equation}
The following theorem is taken, with notation adapted to our work, from \cite[Theorem~6.6, Remark~6.7]{BuffaCiarlet:01b}:

\begin{theorem} \label{theo:tracehcurl}
The tangential trace operator $\vv \to \vv_\tang$ is bounded and surjective as a mapping
\begin{equation}
	\Hv(\opcurl) \to \Hv^{-1/2}_{\bot,00}(\opcurl,\Gamma_D) \label{eq:tracehcurlI}
\end{equation}
and
\begin{equation}
	\Hv_{0,\Gamma_D}(\opcurl) \to \Hv^{-1/2}_{\bot}(\opcurl^0,\Gamma_N). \label{eq:tracehcurlII}
\end{equation}
\end{theorem}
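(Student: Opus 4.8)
The statement is quoted verbatim from Buffa and Ciarlet, so the honest plan is simply to invoke \cite{BuffaCiarlet:01b}; but let me sketch the strategy behind it. The proof splits into two halves — boundedness of the tangential trace, and surjectivity, i.e.\ construction of a bounded right inverse (a lifting). A convenient first reduction is to settle the case of the full boundary $\partial\Omega$ — that $\vv\mapsto\vv_\tang$ maps $\Hv(\opcurl)$ boundedly onto the surface space $\Hv^{-1/2}_\bot(\opcurl_{\partial\Omega},\partial\Omega)$ — and only then localize to the pieces $\GammaD$, $\GammaN$ of the partition.

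\emph{Boundedness.} For $\vv\in C^\infty(\overline\Omega)$ the trace $\vv_\tang$ is classically defined, and the Green identity $\int_\Omega(\opcurl\vv\cdot\boldsymbol{\varphi}-\vv\cdot\opcurl\boldsymbol{\varphi})\,d\xv=\int_{\partial\Omega}(\vv\times\nv)\cdot\boldsymbol{\varphi}\,ds$, valid for $\boldsymbol{\varphi}\in\Hv^1(\Omega)$, exhibits $\vv_\tang$ a priori as a bounded functional on $H^{1/2}(\partial\Omega)$-data, hence as an element of $\Hv^{-1/2}(\partial\Omega)$. The extra structure encoded in the Buffa--Ciarlet space is that this functional additionally possesses a surface curl in $H^{-1/2}(\partial\Omega)$ — read off from the same identity applied to gradient fields $\boldsymbol{\varphi}=\nabla\psi$ — so $\vv_\tang$ is controlled in the graph norm of the surface-curl operator. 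Density of $C^\infty(\overline\Omega)$ in $\Hv(\opcurl)$ then propagates the estimate to all of $\Hv(\opcurl)$. The refinement landing in the space with the ``$00$'' decoration amounts to checking that the localized trace extends by zero across the interface edge $\overline{\GammaD}\cap\overline{\GammaN}$, which is where the quotient/interpolation subtlety of $H^{1/2}_{00}$-type spaces enters.

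\emph{Surjectivity.} Given boundary data $\boldsymbol{\lambda}$ in the target space one must produce $\vv\in\Hv(\opcurl)$ (respectively $\vv\in\Hv_{0,\GammaD}(\opcurl)$) with $\vv_\tang=\boldsymbol{\lambda}$ and with $\Hv(\opcurl)$-norm bounded by the norm of $\boldsymbol{\lambda}$. The standard device is a surface Hodge-type splitting of $\boldsymbol{\lambda}$ on $\partial\Omega$ (or on $\GammaD$) into a surface-gradient part and a surface-curl part: the gradient part is lifted through the scalar $H^{1/2}$-trace theory into an $\Hv^1(\Omega)$ field, and the surface-curl-free remainder is lifted by solving an auxiliary vector problem whose datum carries the surface curl of $\boldsymbol{\lambda}$; patching these with a partition of unity subordinate to a flattening atlas of the Lipschitz boundary gives the lifting. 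For the second mapping, the constraint $\vv_\tang=0$ on $\GammaD$ forces the data on $\GammaN$ to vanish weakly at the interface — this is the meaning of the superscript $^0$, a kernel condition — and the geometric assumptions of Section~\ref{sec:domain} (an auxiliary Lipschitz domain $\Omega_{D,i}$ abutting each Dirichlet component, with the union again Lipschitz) are exactly what permit extending fields across $\GammaD$ and realizing the lifting with the homogeneous tangential constraint.

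\emph{Main obstacle.} All the delicacy is concentrated in the non-smoothness of the boundary: giving an intrinsic, chart-independent meaning to the surface operators $\nabla_\Gamma$, $\operatorname{curl}_\Gamma$ and to the spaces $\Hv^{-1/2}_\bot(\opcurl_\Gamma,\Gamma)$ on a Lipschitz polyhedron, establishing the surface de Rham / Hodge decomposition that drives the surjectivity argument, and controlling behaviour along edges, corners, and — for our setting — along the interface $\overline{\GammaD}\cap\overline{\GammaN}$ where the two boundary conditions meet. This technical apparatus is precisely the content of \cite{BuffaCiarlet:01a,BuffaCiarlet:01b}, so rather than reproduce it we simply cite their result.
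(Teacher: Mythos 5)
Your proposal matches the paper exactly: the paper gives no proof of this theorem, stating only that it is taken (with adapted notation) from Buffa and Ciarlet \cite[Theorem~6.6, Remark~6.7]{BuffaCiarlet:01b}, which is precisely the citation you fall back on. Your sketch of the boundedness and lifting arguments is a fair summary of what lies behind the cited result, but it is not reproduced in, or required by, the paper.
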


The first statement of Theorem~\ref{theo:tracehcurl}  ensures the existence of an extension $\tilde \uv_D$ of a given tangential-displacement boundary value. Additionally, Theorem~\ref{theo:tracehcurl} tells that the surface integral in \eqref{eq:spp2} can be understood as a duality product. We will elaborate on this matter in Section~\ref{sec:TDNNSAnalysis}.

A conforming finite element space for $\Hv(\opcurl)$ has to be tangential continuous, such as the N\'ed\'elec spaces introduced in \cite{Nedelec:80,Nedelec:86}.

An essential tool in the analysis of the TDNNS formulation is the \emph{regular decomposition}. Decompositions satisfying homogeneous Dirichlet or Neumann boundary conditions  have been shown by \cite{PasciakZhao:02} and \cite{Hiptmair:02}, respectively. The case of mixed boundary conditions can be found in \cite{HiptmairZheng:09}.

\begin{theorem}[regular decomposition] \label{theo:regulardec}
For $\uv \in \Hv_{0,\GammaD}(\opcurl)$ there exists a decomposition
\begin{equation}
	\vv = \zv + \nabla \phi,
\end{equation}
where $\zv \in \Hv^1_{0,\GammaD}(\Omega)$ and $\phi \in H^1_{0,\GammaD}(\Omega)$. The respective parts can be bounded by
\begin{equation}
	\|\phi\|_{H^1(\Omega)} \leq c \|\vv\|_{\Hv(\opcurl)} \qquad \mbox{and} \qquad \|\zv\|_{\Hv^1(\Omega)} \leq c \|\vv\|_{\Hv(\opcurl)}, \label{eq:regulardec}
\end{equation}
with a generic constant $c$. 
\end{theorem}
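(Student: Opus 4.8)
The plan is to reduce the mixed–boundary–condition regular decomposition to the known homogeneous cases via a localization and extension argument, exploiting the geometric assumptions made in Section~\ref{sec:domain}. The starting point is the observation that a regular decomposition with \emph{full} homogeneous Dirichlet boundary conditions ($\Gamma_D = \partial\Omega$) follows from \cite{PasciakZhao:02}, and one with \emph{no} boundary conditions ($\Gamma_D = \emptyset$) from \cite{Hiptmair:02}; the content of the theorem is to interpolate between these when $\Gamma_D$ is a proper, nontrivial union of faces. The geometric hypotheses that each connected component $\Gamma_{D,i}$ admits a Lipschitz collar $\Omega_{D,i}$ on the outside of $\Omega$, with the $\Omega_{D,i}$ mutually separated and $\widehat\Omega := \mathrm{int}\bigl(\overline\Omega \cup \bigcup_i \overline\Omega_{D,i}\bigr)$ Lipschitz, are exactly what allow us to extend across $\Gamma_D$.

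The key steps, in order, would be: First, given $\vv \in \Hv_{0,\GammaD}(\opcurl)$, I would extend it by zero through the collars $\Omega_{D,i}$ to a field $\widehat\vv$ on $\widehat\Omega$. Since $\vv$ has vanishing tangential trace on $\Gamma_D$, the zero extension has no tangential jump across $\Gamma_D$, hence $\opcurl\widehat\vv \in \Lv^2(\widehat\Omega)$ and $\widehat\vv \in \Hv(\opcurl,\widehat\Omega)$ with $\|\widehat\vv\|_{\Hv(\opcurl,\widehat\Omega)} = \|\vv\|_{\Hv(\opcurl,\Omega)}$; moreover $\widehat\vv$ vanishes identically (tangential and otherwise) on the parts of $\partial\widehat\Omega$ coming from the collars. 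Second, I would apply a regular decomposition on the Lipschitz domain $\widehat\Omega$ — here the relevant boundary of $\widehat\Omega$ carries, roughly, homogeneous conditions on the collar boundaries and none elsewhere, so one of the cited theorems (or its proof technique, via a Bogovskii-type potential plus a commuting projection) yields $\widehat\vv = \widehat\zv + \nabla\widehat\phi$ with $\widehat\zv \in \Hv^1(\widehat\Omega)$, $\widehat\phi \in H^1(\widehat\Omega)$, and the norm bounds \eqref{eq:regulardec} on $\widehat\Omega$. Third, I would restrict back to $\Omega$, setting $\zv := \widehat\zv|_\Omega$, $\phi := \widehat\phi|_\Omega$, and argue that the homogeneous behaviour of $\widehat\vv$ near $\Gamma_D$ (through the collars) forces $\widehat\zv$ and $\widehat\phi$ to vanish on $\Gamma_D$ in the appropriate $H^1$ sense, giving $\zv \in \Hv^1_{0,\GammaD}(\Omega)$ and $\phi \in H^1_{0,\GammaD}(\Omega)$. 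The generic constant $c$ then comes from the extension-by-zero (norm-preserving), the decomposition constant on $\widehat\Omega$, and the trivial restriction bound.

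The main obstacle I anticipate is the third step: ensuring that the decomposition inherits the Dirichlet boundary behaviour on $\Gamma_D$ — i.e.\ that one really gets $\zv = 0$ and $\phi = 0$ there, not just $\zv \in \Hv^1(\Omega)$. The subtlety is that a generic regular decomposition on $\widehat\Omega$ need not individually respect the vanishing of $\widehat\vv$ on $\Gamma_D$, only their sum does; one typically needs the decomposition operator to be \emph{local} (so that it does not see, through $\widehat\vv\equiv 0$ on a neighbourhood of $\Gamma_D$ inside $\Omega\cup\Omega_{D,i}$, anything to produce there) or to compose with a cut-off and re-correct. This is precisely the point handled carefully in \cite{HiptmairZheng:09}, and I expect the cleanest route is to cite their Theorem directly for the mixed case rather than re-derive it; the role of the geometric assumptions is to bring us into the exact setting of \cite{HiptmairZheng:09}. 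A secondary, purely technical point is checking that the zero extension across a Lipschitz interface $\Gamma_D$ genuinely lands in $\Hv(\opcurl,\widehat\Omega)$: this is the standard fact that the tangential trace is the obstruction to gluing $\Hv(\opcurl)$ fields across an interface, and it applies here because $\vv_\tang = 0$ on $\Gamma_D$.
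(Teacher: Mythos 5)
The paper gives no proof of this theorem at all: it is quoted from \cite{HiptmairZheng:09}, and the collar assumptions on $\Omega_{D,i}$ in Section~\ref{sec:domain} are stated precisely to place $\Omega$, $\GammaD$ in the setting of that reference. Your ultimate recommendation --- cite \cite{HiptmairZheng:09} directly for the mixed case --- is therefore exactly the paper's route, and your remark that the geometric hypotheses exist to enable this citation is on target. However, the three-step extension-by-zero argument you sketch should not be presented as an alternative proof: it breaks down at the third step for the reason you yourself flag. After extending $\vv$ by zero into the collars, $\GammaD$ becomes an \emph{interior} interface of $\widehat\Omega$, so any regular decomposition on $\widehat\Omega$ (with or without boundary conditions on $\partial\widehat\Omega$) gives no control of $\widehat\zv$ or $\widehat\phi$ individually on $\GammaD$; the fact that $\widehat\zv + \nabla\widehat\phi \equiv 0$ in the collars does not force either piece to vanish there, and a generic decomposition operator is not local in the required sense. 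Repairing this (cut-offs near $\GammaD$, correction of the resulting commutator terms, and re-establishing the $\Hv^1$/$H^1$ bounds) is essentially the technical content of \cite{HiptmairZheng:09}, so the citation, not the sketch, carries the proof --- which is how the paper treats it.
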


\subsection{The stress space $\Ht(\opdiv\opdivv)$}

We still need to specify the stress space. Roughly, it is a subspace of $\Lt^2$ where the (scalar-valued) divergence of the (vector-valued) divergence of the stress tensor lies in the dual space of $H^1_{0,\Gamma_D}$. In \eqref{eq:defnormstress}, the norm of the desired space is stated for smooth functions. We will proceed as follows: first, we formally define the space $\Ht(\opdiv\opdivv)$ as the closure of smooth functions, and give an interpretation of the norm which rectifies the name $\Ht(\opdiv\opdivv)$. Then, we show that the normal-normal trace can be bounded in this norm in the appropriate setting. Thus we can define the subspace $\Ht_{0,\Gamma_N}(\opdiv\opdivv)$ satisfying zero normal-normal boundary conditions on $\Gamma_N$ as the closure of smooth functions vanishing on $\Gamma_N$. Last, we provide an inverse trace theorem, which allows to extend normal-normal stress distributions from the boundary to the whole space $\Ht(\opdiv\opdivv)$.

The norm $\|\cdot\|_{\Ht(\opdiv\opdivv)}$ shall be defined for smooth $\taut \in C^{\infty}(\bar \Omega)$ by
\begin{equation}
	\|\taut\|_{\Ht(\opdiv\opdivv)}^2 = \|\taut\|_{\Lt^2}^2 + \left(\sup_{\varphi \in  H^2\cap H^1_{0,\GammaD}} \frac{\int_{\Omega} \taut:\epst(\nabla \varphi)\,d\xv - \int_{\partial \Omega} \tau_{nn} \frac{\partial \varphi}{\partial n}}{\|\nabla \varphi\|_{\Lv^2(\Omega)}}\right)^2 .
\label{eq:defnormstress}
\end{equation}

Note that, due to the symmetry of the Hessian $\nablat^2\varphi$, the symmetric expression $\epst(\nabla \varphi)$ is the same as the conventional notation $\nablat^2\varphi$. We use $\epst(\nabla \varphi)$, as it shows the relation to linear elasticity. 

We define the space $\Ht(\opdiv\opdivv)$ as
\begin{equation}
 \Ht(\opdiv\opdivv) := \overline{\underline{\mathbf{C}}^\infty_{sym}}^{\|\cdot\|_{\Ht(\opdiv\opdivv)}}.
\end{equation}

The second term in the definition of the norm \eqref{eq:defnormstress} is a seminorm and can be interpreted as the norm of $\opdiv\opdiv\taut$ in the dual space of $H^{1}_{0,\Gamma_D}$. Integration by parts of the denominator gives for smooth $\taut$, using that $\varphi$ and $\partial \varphi/\partial \tang$ vanish on $\Gamma_D$,
\begin{eqnarray}
\lefteqn{\int_\Omega \taut : \epst(\nabla \varphi)\, d\xv - \int_{\partial \Omega}\tau_{nn} \frac{\partial \varphi}{\partial n}\, ds}\\
&=& -\int_{\Omega} \opdivv \taut \cdot \nabla \varphi \, dx + \int_{\Gamma_N} \tauv_{\nv\tang} \cdot \frac{\partial \varphi}{\partial \tang}\, ds\\
&=& \int_{\Omega} \opdiv\opdivv \taut \,\varphi \, dx - \int_{\Gamma_N} (\opdivv\taut)_n\varphi\,ds + \int_{\Gamma_N} \tauv_{\nv\tang} \cdot \frac{\partial \varphi}{\partial \tang}\, ds
\end{eqnarray}

The supremum in \eqref{eq:defnormstress} can be interpreted as a dual norm: in the interior, $\opdiv\opdivv\taut$ is in the dual space of $H^1_{0,\Gamma_D}$. On $\Gamma_N$ we have $(\opdivv\taut)_n$ in the dual of the trace space $H^{1/2}_{00}(\Gamma_N)$. In the last term, the tangential derivative $\partial \varphi/\partial \tang$ appears. Since the gradient of $H^1$ lies in $\Hv(\opcurl)$, this tangential derivative is in $H^{-1/2}_{\bot}(\opcurl^0,\Gamma_N)$, see Theorem~\ref{theo:tracehcurl}. The normal-tangential stress $\tauv_{\nv\tang}$ is thus in the dual of this space, which means \cite{BuffaCiarlet:01b}
\begin{equation}
\tauv_{\nv\tang} \in [\Hv^{-1/2}_{\bot}(\opcurl^0,\Gamma_N)]^* = \Hv^{-1/2}_{\parallel,00}(\opdiv,\Gamma_N).
\end{equation}
We will comment on this restriction in Section~\ref{sec:fem}, as it our finite element space is not conforming in this term.

We will now define a space for the normal-normal trace, and show that the normal-normal trace is bounded in the $\Ht(\opdiv\opdivv)$ norm. To this end, we need the space of traces of the normal derivative of $H^2\cap H^1_{0,\Gamma_D}$,
\begin{equation}
	H^{1/2}_n(\partial\Omega) := \left\{ w = \frac{\partial \tilde w }{\partial n}: \tilde w \in H^2\cap H^1_{0,\Gamma_D}\right\}.
\end{equation}
For a polyhedral domain and $\Gamma_D = \emptyset$, $H^{1/2}_n(\partial\Omega)$ consists of piecewise $H^{1/2}(\Gamma_i)$ without continuity assumptions on the polyhedron edges or vertices, see e.g.\ \cite{GiraultRaviart:86}. For $\Gamma_D = \partial \Omega$, $H^{1/2}_n(\partial\Omega)$ is the subspace of the piecewise $H^{1/2}_{00}(\Gamma_i)$ spaces without continuity assumptions on the polyhedron edges or vertices, see e.g.\ \cite{Grisvard:85}.  To the best knowledge of the authors, this space has not been analyzed so far for general, nontrivial $\Gamma_D$. In this work, we only use that the space can be defined piecewise on each polyhedral face.

The normal-normal trace space of $\Ht(\opdiv\opdivv)$ is then given by
\begin{eqnarray}
	H^{-1/2}_n(\partial\Omega) &:=& [H^{1/2}_n(\partial \Omega)]^*.
\end{eqnarray}
An appropriate norm on $H^{-1/2}_n(\partial\Omega)$ is given by
\begin{equation}
 \|g\|_{H^{-1/2}_n(\partial\Omega)} = \sup_{\tilde w \in H^2\cap H^1_{0,\Gamma_D}} \frac{\langle g, \frac{\partial\tilde w}{\partial n}\rangle}{\|\nabla \tilde w\|_{H^1}}.
\end{equation}
Note that due to the piecewise nature of $H^{1/2}_n(\partial \Omega)$, the trace space can be restricted to each polyhedral face $\Gamma_i$, and extended from each face to the whole boundary by zero.

\begin{theorem} \label{theo:tracehdivdiv}
The normal-normal trace operator is bounded from $\Ht(\opdiv\opdivv)$ to $H^{-1/2}_n(\Gamma_i)$ for each boundary face $\Gamma_i \subset \partial \Omega$. Thus, it is well defined on $\Ht(\opdiv\opdivv)$ as the extension from $\Ct^{\infty}_{sym}$. For $\taut \in \Ht(\opdiv\opdivv)$ there holds the bound
\begin{equation}
	\|\tau_{nn}\|_{H^{-1/2}_n(\Gamma_i)} 
	\leq c \|\taut\|_{\Ht(\opdiv\opdivv)}
\end{equation}
with the constant $c$ independent of $\taut$.
\end{theorem}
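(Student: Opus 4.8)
The plan is to read the desired bound essentially off the definition \eqref{eq:defnormstress} of the $\Ht(\opdiv\opdivv)$-norm, which was set up precisely so as to contain the boundary term $\int_{\partial\Omega}\tau_{nn}\,\partial\varphi/\partial n\,ds$. First I would reduce to smooth $\taut\in\Ct^{\infty}_{sym}$, for which $\tau_{nn}|_{\Gamma_i}$ is the classical trace: once a uniform estimate $\|\tau_{nn}\|_{H^{-1/2}_n(\Gamma_i)}\le c\,\|\taut\|_{\Ht(\opdiv\opdivv)}$ is available for smooth tensors, a Cauchy sequence in $\Ht(\opdiv\opdivv)$ is mapped to a Cauchy sequence in $H^{-1/2}_n(\Gamma_i)$, whose limit defines the normal-normal trace of the limit tensor. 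This is exactly the assertion that the operator is well defined on all of $\Ht(\opdiv\opdivv)$ as the continuous extension from $\Ct^{\infty}_{sym}$.

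For smooth $\taut$ and any test function $\varphi\in H^2\cap H^1_{0,\GammaD}$, the definition \eqref{eq:defnormstress} directly gives
\[
	\Bigl|\int_\Omega\taut:\epst(\nabla\varphi)\,d\xv-\int_{\partial\Omega}\tau_{nn}\tfrac{\partial\varphi}{\partial n}\,ds\Bigr|\le\|\taut\|_{\Ht(\opdiv\opdivv)}\,\|\nabla\varphi\|_{\Lv^2}.
\]
Moving the volume integral to the right-hand side, estimating it by Cauchy--Schwarz, and using the three elementary bounds $\|\taut\|_{\Lt^2}\le\|\taut\|_{\Ht(\opdiv\opdivv)}$, $\|\epst(\nabla\varphi)\|_{\Lt^2}=\|\nablat^2\varphi\|_{\Lt^2}\le\|\nabla\varphi\|_{\Hv^1}$ and $\|\nabla\varphi\|_{\Lv^2}\le\|\nabla\varphi\|_{\Hv^1}$, I obtain
\[
	\Bigl|\int_{\partial\Omega}\tau_{nn}\tfrac{\partial\varphi}{\partial n}\,ds\Bigr|\le\|\taut\|_{\Lt^2}\|\nablat^2\varphi\|_{\Lt^2}+\|\taut\|_{\Ht(\opdiv\opdivv)}\|\nabla\varphi\|_{\Lv^2}\le 2\,\|\taut\|_{\Ht(\opdiv\opdivv)}\,\|\nabla\varphi\|_{\Hv^1}.
\]
Since $\|\nabla\varphi\|_{\Hv^1}$ is a genuine norm on $H^2\cap H^1_{0,\GammaD}$ (because $\GammaD\neq\emptyset$), dividing and taking the supremum over $\varphi$ yields $\tau_{nn}\in H^{-1/2}_n(\partial\Omega)$ with $\|\tau_{nn}\|_{H^{-1/2}_n(\partial\Omega)}\le 2\,\|\taut\|_{\Ht(\opdiv\opdivv)}$, which is the claimed estimate but phrased on the whole boundary.

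To descend to a single face $\Gamma_i$, I would invoke the piecewise structure of $H^{1/2}_n(\partial\Omega)$ recorded after its definition: a function living on $\Gamma_i$ extends by zero to $\partial\Omega$ without leaving the space, and this zero-extension $H^{1/2}_n(\Gamma_i)\to H^{1/2}_n(\partial\Omega)$ is bounded. Its Banach-space adjoint is exactly the restriction $g\mapsto g|_{\Gamma_i}$, hence a bounded map $H^{-1/2}_n(\partial\Omega)\to H^{-1/2}_n(\Gamma_i)$; composing with the bound of the previous step gives $\|\tau_{nn}\|_{H^{-1/2}_n(\Gamma_i)}\le c\,\|\taut\|_{\Ht(\opdiv\opdivv)}$, and the density argument of the first paragraph completes the proof.

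The norm estimate above is immediate; the one substantial ingredient is the structural claim that $H^{1/2}_n(\partial\Omega)$ splits face by face with bounded restriction to, and zero-extension from, each $\Gamma_i$. For $\GammaD=\emptyset$ this space is the piecewise $H^{1/2}(\Gamma_i)$ with no compatibility across polyhedron edges or vertices, and for $\GammaD=\partial\Omega$ the piecewise $H^{1/2}_{00}(\Gamma_i)$, so in both cases zero-extension from a face is harmless (cf.\ \cite{GiraultRaviart:86,Grisvard:85}); for general mixed $\GammaD$ one relies, as stated in the text, on the space being definable independently on each polyhedral face. I expect this decomposition — not the norm estimate — to be the delicate point, as it hinges on the behaviour of the lifting $w\mapsto\tilde w\in H^2\cap H^1_{0,\GammaD}$ with prescribed normal derivative near the edges of $\partial\Omega$.
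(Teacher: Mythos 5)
Your proposal is correct and follows essentially the same route as the paper: bound the whole-boundary dual norm by splitting off the volume term (controlled by the $\Lt^2$ part of the norm) from the combined volume–boundary term (which is exactly the supremum in \eqref{eq:defnormstress}), and pass to a single face $\Gamma_i$ via zero extension in $H^{1/2}_n$, with density giving the extension from $\Ct^{\infty}_{sym}$. The only differences are cosmetic — you restrict to the face after the boundary estimate rather than before, and obtain the constant $2$ instead of $\sqrt{2}$.
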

\begin{proof}
Let $\taut \in \Ct^{\infty}_{sym}$ be fixed. We first show that the normal-normal trace on a boundary face $\Gamma_i$ can be bounded by the normal-normal trace on the whole boundary. Since $H^{1/2}_n(\partial\Omega)$ is a piecewise defined space without continuity assumptions between polyhedron faces, any $\varphi \in H^{1/2}_n(\Gamma_i)$ can be extended by zero to $\phi \in H^{1/2}_n(\partial \Omega)$. By definition of the dual norm we see
\begin{eqnarray}
	\|\tau_{nn}\|_{H^{-1/2}_n(\Gamma_i)} 
		&=& \sup_{\varphi\in H^{1/2}_n(\Gamma_i)} \frac{\int_{\Gamma_i} \tau_{nn}\varphi\,ds}{\|\varphi\|_{H^{1/2}_n(\Gamma_i)}}\\
		&=& \sup_{\varphi\in H^{1/2}_n(\partial\Omega)\atop\varphi=0\ \mathrm{on}\ \partial\Omega\backslash\Gamma_i} 
			\frac{\int_{\partial\Omega} \tau_{nn}\varphi\,ds}{\|\varphi\|_{H^{1/2}_n(\partial\Omega)}}\\
		&\leq& \sup_{\varphi\in H^{1/2}_n(\partial\Omega)} 
			\frac{\int_{\partial\Omega} \tau_{nn}\varphi\,ds}{\|\varphi\|_{H^{1/2}_n(\partial\Omega)}} = \|\tau_{nn}\|_{H^{-1/2}_n(\partial\Omega)}.
\end{eqnarray}
We proceed showing the actual trace inequality, where we use that $H^{1/2}_n(\partial\Omega)$ is defined as the trace space of $H^2\cap H^1_{0,\Gamma_D}$,
\begin{eqnarray}
\lefteqn{\|\tau_{nn}\|_{H^{-1/2}_n(\partial \Omega)} }\\
&=& \sup_{\varphi \in H^2\cap H^1_{0,\Gamma_D}} \frac{\int_{\partial \Omega} \tau_{nn} \frac{\partial \varphi}{\partial n}\,d\xv}{\|\nabla \varphi\|_{\Hv^1}}\\
&\leq& \sup_{\varphi \in H^2\cap H^1_{0,\Gamma_D}} \frac{-\int_\Omega \taut :\epst(\nabla \varphi) d\xv 
                                    +\int_{\partial\Omega} \tau_{nn} \frac{\partial \varphi}{\partial n}\,d\xv
																		}
																		{\|\nabla \varphi\|_{\Lv^2}} + \label{eq:38}\\
								&&				\sup_{\varphi \in H^2\cap H^1_{0,\Gamma_D}} \frac{
												            \int_\Omega \taut : \epst(\nabla \varphi)\,d\xv 
																		}
																		{\|\nabla \varphi\|_{\Hv^1}} \label{eq:39a}
\end{eqnarray}
We see that the supremum from eq. \eqref{eq:38} is already contained in the $\Ht(\opdiv\opdivv)$-norm. For the supremum from eq. \eqref{eq:39a}, we use that
\begin{equation}
\sup_{\varphi \in H^2\cap H^1_{0,\Gamma_D}} \frac{
												            \int_\Omega \taut : \epst(\nabla \varphi)\,d\xv 
																		}
																		{\|\nabla \varphi\|_{\Hv^1}}
																		\leq 
\sup_{\varphi \in H^2\cap H^1_{0,\Gamma_D}} \frac{
												            \int_\Omega \taut : \epst(\nabla \varphi)\,d\xv 
																		}
																		{\|\epst(\nabla \varphi)\|_{\Lt^2}}
																		\leq \|\taut\|_{\Lt^2}.
\end{equation}
Therefore, we arrive at the desired result
\begin{eqnarray}
\lefteqn{\|\tau_{nn}\|_{H^{-1/2}_n(\partial \Omega)} }\\
&\leq& \sup_{\varphi \in H^2\cap H^1_{0,\Gamma_D}} \frac{-\int_\Omega \taut :\epst(\nabla \varphi) d\xv 
                                    +\int_{\partial\Omega} \tau_{nn} \frac{\partial \varphi}{\partial n}\,d\xv
																		}
																		{\|\nabla \varphi\|_{\Lv^2}} +
												\|\taut\|_{\Lt^2}\\
&\leq& \sqrt{2}\|\taut\|_{\Ht(\opdiv\opdivv)}.
\end{eqnarray}

\end{proof}

The trace theorem above allows to define the space
\begin{equation}
	\Ht_{0,\Gamma_N}(\opdiv\opdivv) := \overline{\Ct^\infty_{sym,0,\Gamma_N}}^{\|\cdot\|_{\Ht(\opdiv\opdivv)}}.
\end{equation}
Any $\taut \in \Ht_{0,\Gamma_N}(\opdiv\opdivv)$ has a well-defined normal-normal trace in $H^{-1/2}_n(\partial\Omega)$, and it holds that $\tau_{nn} = 0$ in $H^{-1/2}_n(\Gamma_N)$.

Finally, we provide an inverse trace theorem for the space $\Ht(\opdiv\opdivv)$, before we proceed to the analysis of the  TDNNS elasticity problem. The inverse trace theorem allows to find an extension of a given (scalar) normal-normal stress on the boundary to a (tensor-valued) stress field on the domain.

\begin{theorem}
For any boundary face $\Gamma_i$, and $g \in H^{-1/2}_n(\Gamma_i)$, there exists a tensor field $\taut \in \Ht(\opdiv\opdivv)$ with $g = \tau_{nn}$ in the sense of $H^{-1/2}_n(\Gamma_i)$ and 
\begin{equation}
 \|\taut\|_{\Ht(\opdiv\opdivv)} \leq c \|g\|_{H^{-1/2}_n(\Gamma_i)}, \label{eq:invtrace}
\end{equation}
with constant $c$ independent of $g$.
\end{theorem}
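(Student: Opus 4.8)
The plan is to reduce the construction to the classical inverse trace theorem for $\Hv(\opdiv,\Omega)$, by building $\taut$ as the symmetric tensor product of the (constant) outward unit normal $\nv_i$ of the flat face $\Gamma_i$ with a suitable vector field $\qv$. A preliminary reduction to regular data is convenient: since $C^\infty_0(\Gamma_i)$ is dense in $H^{-1/2}_n(\Gamma_i)$ (via the Gelfand triple $H^{1/2}_n(\Gamma_i)\hookrightarrow L^2(\Gamma_i)\hookrightarrow H^{-1/2}_n(\Gamma_i)$), it suffices to construct, for $g\in C^\infty_0(\Gamma_i)$, an extension $\taut\in\Ht(\opdiv\opdivv)$ obeying a bound whose constant depends only on the norm governing the lifting below; the general statement then follows by applying the construction to a Cauchy sequence $g_k\to g$ and passing to the limit, using the continuity of the normal-normal trace of Theorem~\ref{theo:tracehdivdiv} and the completeness of $\Ht(\opdiv\opdivv)$.

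So fix $g\in C^\infty_0(\Gamma_i)$ and let $\tilde g$ be its extension by zero, a smooth function on $\partial\Omega$ supported in the interior of $\Gamma_i$. By the inverse trace theorem for $\Hv(\opdiv,\Omega)$ there is a (smooth, up to harmless corner singularities) $\qv$ with $\qv\cdot\nv=\tilde g$ on $\partial\Omega$ and $\|\qv\|_{\Lv^2}+\|\opdiv\qv\|_{L^2}\leq c\,\|g\|_{H^{-1/2}_n(\Gamma_i)}$, chosen so that the tensor $\taut:=\tfrac12(\nv_i\otimes\qv+\qv\otimes\nv_i)$ lies in the closure $\Ht(\opdiv\opdivv)$. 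This $\taut$ is symmetric, and from the pointwise identity $|\taut|^2=\tfrac12|\qv|^2+\tfrac12(\nv_i\cdot\qv)^2\leq|\qv|^2$ one gets $\|\taut\|_{\Lt^2}\leq\|\qv\|_{\Lv^2}$. On $\partial\Omega$ its normal-normal component is $\tau_{nn}=(\nv\cdot\nv_i)(\qv\cdot\nv)=(\nv\cdot\nv_i)\tilde g$, so $\tau_{nn}=g$ on $\Gamma_i$ (where $\nv=\nv_i$) and $\tau_{nn}=0$ on every other face; thus $g=\tau_{nn}$ on $\Gamma_i$, as required.

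It remains to bound the seminorm in \eqref{eq:defnormstress}. For $\varphi\in H^2\cap H^1_{0,\GammaD}$, the symmetry of $\nablat^2\varphi$ gives $\int_\Omega\taut:\epst(\nabla\varphi)\,d\xv=\int_\Omega\qv\cdot\nabla(\nv_i\cdot\nabla\varphi)\,d\xv=-\int_\Omega(\opdiv\qv)(\nv_i\cdot\nabla\varphi)\,d\xv+\int_{\partial\Omega}\tilde g\,(\nv_i\cdot\nabla\varphi)\,ds$. Splitting $\nv_i\cdot\nabla\varphi=(\nv\cdot\nv_i)\tfrac{\partial\varphi}{\partial n}+(\nv_i)_\tang\cdot\tfrac{\partial\varphi}{\partial\tang}$ on $\partial\Omega$, the term $\int_{\partial\Omega}\tilde g\,(\nv\cdot\nv_i)\tfrac{\partial\varphi}{\partial n}\,ds$ cancels exactly against $\int_{\partial\Omega}\tau_{nn}\tfrac{\partial\varphi}{\partial n}\,ds$, so the numerator in \eqref{eq:defnormstress} reduces to $-\int_\Omega(\opdiv\qv)(\nv_i\cdot\nabla\varphi)\,d\xv+\int_{\partial\Omega}\tilde g\,(\nv_i)_\tang\cdot\tfrac{\partial\varphi}{\partial\tang}\,ds$. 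The last integral vanishes, because $\tilde g$ is supported in $\Gamma_i$ and on $\Gamma_i$ the tangential projection $(\nv_i)_\tang$ of the constant vector $\nv_i$ is zero. Hence the numerator equals $-\int_\Omega(\opdiv\qv)(\nv_i\cdot\nabla\varphi)\,d\xv$, which is bounded by $\|\opdiv\qv\|_{L^2}\|\nabla\varphi\|_{\Lv^2}$; therefore $\|\taut\|_{\Ht(\opdiv\opdivv)}\leq(\|\qv\|_{\Lv^2}^2+\|\opdiv\qv\|_{L^2}^2)^{1/2}\leq c\,\|g\|_{H^{-1/2}_n(\Gamma_i)}$, which is \eqref{eq:invtrace}.

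I expect the real difficulty to lie not in the ansatz but in two points. First, the boundary terms of the $\Ht(\opdiv\opdivv)$-seminorm: a priori $\taut$ carries a normal-tangential trace built from the ill-defined tangential trace of $\qv\in\Hv(\opdiv)$, and the estimate closes only because the flatness of the single face $\Gamma_i$ forces $(\nv_i)_\tang=0$ there while $\qv$ was taken with normal trace supported in $\Gamma_i$. Second, and more seriously, the passage from $g\in C^\infty_0(\Gamma_i)$ to general $g\in H^{-1/2}_n(\Gamma_i)$ \emph{with the asserted constant}: this requires identifying the norm that controls the $\Hv(\opdiv)$-lifting of the zero extension with $\|g\|_{H^{-1/2}_n(\Gamma_i)}$, which hinges on a precise description of $H^{1/2}_n(\Gamma_i)$ and its dual and is delicate exactly at edges of $\Gamma_i$ adjacent to $\Gamma_D$, where the trace space departs from $H^{1/2}(\Gamma_i)$; handling those edges may force the global $\Hv(\opdiv)$-argument to be supplemented by a more local construction near them.
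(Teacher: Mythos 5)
Your ansatz and the algebra behind it are fine: with $\taut=\tfrac12(\nv_i\otimes\qv+\qv\otimes\nv_i)$ the cancellation of the $\tau_{nn}\,\partial\varphi/\partial n$ term and the vanishing of the tangential term on the flat face are correct, and the seminorm is indeed controlled by $\|\opdiv\qv\|_{L^2}$. The genuine gap is the one you half-acknowledge at the end, and it is not a patchable technicality but the heart of the matter: your construction needs the zero extension $\tilde g$ to lie in $H^{-1/2}(\partial\Omega)$ with $\|\tilde g\|_{H^{-1/2}(\partial\Omega)}\leq c\,\|g\|_{H^{-1/2}_n(\Gamma_i)}$, equivalently a bounded restriction map $H^{1/2}(\partial\Omega)\to H^{1/2}_n(\Gamma_i)$, i.e.\ an $H^2\cap H^1_{0,\GammaD}$ lifting of prescribed normal-derivative data on $\Gamma_i$. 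Whenever $\GammaD$ meets $\overline{\Gamma_i}$ this is exactly the uncharted structure of $H^{1/2}_n$ that the paper explicitly declines to analyze, and in the admissible extreme case $\GammaN=\emptyset$, $\GammaD=\partial\Omega$ it is simply false in the quantitative sense you need: there $H^{1/2}_n(\Gamma_i)$ sits inside $H^{1/2}_{00}(\Gamma_i)$, so $H^{-1/2}_n(\Gamma_i)$ carries a norm strictly weaker than the dual norm $\bigl(H^{1/2}(\Gamma_i)\bigr)^*$ that governs the $\Hv(\opdiv)$ inverse trace theorem. Consequently your density argument does not close: for smooth $g_k\to g$ the liftings $\qv_k$ are bounded only by $\|\tilde g_k\|_{H^{-1/2}(\partial\Omega)}$, which is not uniformly controlled by $\|g_k\|_{H^{-1/2}_n(\Gamma_i)}$, so no limit extension with the asserted constant is obtained. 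A secondary, smaller issue is that for a non-smooth lifting $\qv\in\Hv(\opdiv)$ you still owe an argument that $\taut$ belongs to the closure-defined space $\Ht(\opdiv\opdivv)$.

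The paper takes a different route precisely to avoid any comparison between $H^{-1/2}_n$ and classical trace spaces: it solves the coercive auxiliary problem in $H^2\cap H^1_{0,\GammaD}$, namely $\int_\Omega\epst(\nabla w):\epst(\nabla v)\,d\xv+\int_\Omega\nabla w\cdot\nabla v\,d\xv=\langle g,\partial v/\partial n\rangle$ for all $v\in H^2\cap H^1_{0,\GammaD}$, and sets $\taut=\epst(\nabla w)$. Because the right-hand side is the duality pairing measured against $\|\nabla v\|_{H^1}$, Lax--Milgram delivers $\|\epst(\nabla w)\|_{\Lt^2}+\|\nabla w\|_{\Lv^2}\leq c\|g\|_{H^{-1/2}_n(\Gamma_i)}$ with the correct norm by definition; the natural boundary condition of the auxiliary problem gives $\tau_{nn}=g$ on $\Gamma_i$, and testing with gradients reduces the supremum term of \eqref{eq:defnormstress} to $\|\nabla w\|_{\Lv^2}$. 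If you want to salvage your approach, you would have to replace the global $\Hv(\opdiv)$ lifting by a lifting adapted to the weighted ($H^{1/2}_{00}$-type) behaviour of $H^{1/2}_n(\Gamma_i)$ at edges shared with $\GammaD$, which amounts to reproving exactly the kind of duality-based extension that the paper's variational construction gives for free.
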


\begin{proof}
For boundary face $\Gamma_i$, let $g \in H^{-1/2}_n(\Gamma_i)$ be given. The extension of $g$ by zero lies in $H^{-1/2}_n(\partial\Omega)$, which is the dual of the trace space of $H^2 \cap H^1_{0,\Gamma_D}$. This allows to pose the following problem in $H^2 \cap H^1_{0,\Gamma_D}$ with well-defined right hand side: find $w \in H^2 \cap H^1_{0,\Gamma_D}$ such that
\begin{align}
\int \epst(\nabla w): \epst(\nabla v)\, d\xv + \int_\Omega \nabla w \nabla v\, dx = \langle g, \frac{\partial v}{\partial n}\rangle && v \in H^2 \cap H^1_{0,\Gamma_D}. \label{eq:varH2}
\end{align}
Solvability of \eqref{eq:varH2} is clear, as we note that $\epst(\nabla w) = \nabla^2w$ due to the symmetry of the Hessian.
By the standard theory of Lax and Milgram we obtain the stability estimate
\begin{equation}
\|\epst(\nabla w)\|_{\Lt^2}^2 + \|\nabla w\|_{\Lv^2}^2 \leq c \|g\|_{H^{-1/2}_n(\Gamma_i)}^2.
\end{equation}
We choose $\taut = \epst(\nabla w)$, which is clearly symmetric and bounded in $\Lt^2$:
\begin{equation}
\|\taut\|_{\Lt^2}^2 \leq c \|g\|_{H^{-1/2}_n(\Gamma_i)}^2. \label{eq:39b}
\end{equation}
Additionally, it satisfies the natural boundary condition $\tau_{nn} = g$ in $H^{-1/2}_n(\Gamma_i)$. 
It remains to show that our choice of $\taut$ lies actually in $\Ht(\opdiv\opdivv)$, and satisfies the estimate \eqref{eq:invtrace}.
To this end, we still need to bound the supremum term in the definition of the norm. 
Any $\varphi \in C^{\infty}\cap H^1_{0,\Gamma_D}$ is a valid test function for the variational equation \eqref{eq:varH2}. This implies
\begin{eqnarray}
\lefteqn{ \sup_{\varphi \in C^{\infty}\cap H^1_{0,\Gamma_D}} \frac{\int_\Omega \taut : \epst(\nabla \varphi) d\xv - \langle \tau_{nn}, \frac{\partial \varphi}{\partial n} \rangle}{\|\nabla \varphi\|_{\Lv^2}} } \label{eq:40}\\
&=& \sup_{\varphi \in C^{\infty}\cap H^1_{0,\Gamma_D}} \frac{\int_\Omega \epst(\nabla w) : \epst(\nabla \varphi) d\xv - \langle g,  \frac{\partial \varphi}{\partial n} \rangle}{\|\nabla \varphi\|_{\Lv^2}} \\
&=& \sup_{\varphi \in C^{\infty}\cap H^1_{0,\Gamma_D}} \frac{ \int_\Omega \nabla w \cdot \nabla \varphi\, d\xv }{\|\nabla \varphi\|_{\Lt^2}} \\
&\leq&  \|\nabla w\|_{\Lv^2} \leq c \|g\|_{H^{-1/2}_n(\Gamma_i)}. \label{eq:43}
\end{eqnarray}
Adding up \eqref{eq:39b} and \eqref{eq:40}-\eqref{eq:43} leads to the desired bound \eqref{eq:invtrace}.
\end{proof}

With tools concerning $\Ht(\opdiv\opdivv)$ now at hand, we can proceed to the analysis of the variational problem \eqref{eq:spp1} -- \eqref{eq:spp2}.

\section{Analysis of the TDNNS problem} \label{sec:TDNNSAnalysis}

In the current section, we show existence and uniqueness of a solution to the TDNNS elasticity problem \eqref{eq:spp1} -- \eqref{eq:spp2}. We specify the variational spaces $\spaceSigma$ and $\spaceV$ foreshadowed in Section~\ref{sec:variational},
\begin{eqnarray}
	\spaceSigma &:=& \Ht_{0,\Gamma_N}(\opdiv\opdivv),\\
	\spaceV &:=& \Hv_{0,\Gamma_D}(\opcurl).
\end{eqnarray}
We shall use the theory on mixed problem treated in detail in \cite{BoffiBrezziFortin:13}. First, we concentrate on boundary conditions, then we show stability estimates for the bilinear forms $\bilA(\cdot,\cdot)$ and $\bilB(\cdot,\cdot)$. These results allow us to derive existence, uniqueness and stability of a solution to the TDNNS elasticity problem.

\subsection{Boundary conditions}

We assumed boundary conditions $\uv_D$ on $\Gamma_D$ and $\tv_{N}$ on $\Gamma_N$ to be given. We shall comment on the regularity necessary for these boundary conditions, such that the variational problem is well-defined. We treat the essential boundary conditions on tangential displacement and normal-normal stress first, and proceed to the natural boundary conditions on the normal displacement and normal-tangential stress afterwards.

In the variational formulation \eqref{eq:spp1} -- \eqref{eq:spp2}, we used extensions $\tilde \uv_D$ and $\tilde \sigmat_N$ of the given boundary data. The trace theorems for $\Hv(\opcurl)$ and $\Ht(\opdiv\opdivv)$ ensure that, given $\uv_{D,\tang} \in \Hv^{-1/2}_{\bot,00}(\opcurl,\Gamma_D)$ and $t_{N,n} \in H^{-1/2}_n(\Gamma_N)$, extensions can be found satisfying
\begin{align}
\tilde \uv_{D,\tang} &= \uv_{D,\tang}, &  \|\tilde \uv_{D}\|_{\Hv(\opcurl)} &\leq c \|\uv_{D,\tang}\|_{\Hv^{-1/2}_{\bot,00}(\opcurl,\Gamma_D)},\\
\tilde \sigma_{N,nn} &= t_{N,n}, & \|\tilde \sigmat_N\|_{\Ht(\opdiv\opdivv)} &\leq c \|t_{N,n}\|_{H^{-1/2}_n(\Gamma_N)}.
\end{align}

Natural boundary conditions on normal displacement and tangential component of normal stress are included into the right hand side of the variational problem \eqref{eq:spp1} -- \eqref{eq:spp2}. For smooth functions, they are included as surface integrals
\begin{equation}
	\int_{\Gamma_D} u_{D,n}\, \tau_{nn}\, ds\quad  \mbox{and}\quad \int_{\Gamma_N} \tv_{N,\tang}\cdot \vv_\tang\,ds
\end{equation}
We will see that both boundary integrals can be understood in the sense of duality products in the respective trace spaces, which makes them well-defined on the whole variational spaces. 

The trace theorem on $\Ht(\opdiv\opdivv)$, Theorem~\ref{theo:tracehdivdiv}, ensures that
\begin{equation}
 \langle u_{D,n}, \tau_{nn}\rangle_{H^{1/2}_n(\Gamma_D)\times H^{-1/2}_n(\Gamma_D)} \leq c \|u_n\|_{H^{1/2}_n(\Gamma_D)}\|\taut\|_{\Ht(\opdiv\opdivv)}.
\end{equation}
Thus, it is necessary to have the normal displacement $u_{D,n} \in H^{1/2}_n(\Gamma_D)$.

The second statement from Theorem~\ref{theo:tracehcurl}, \eqref{eq:tracehcurlII} ensures that for $\vv \in \spaceV$ the tangential trace $\vv_\tang$ allows for a surface curl, $\vv_\tang \in \Hv^{-1/2}_{\bot}(\opcurl^0,\Gamma_N)$. Therefore, the normal-tangential trace of the given surface tractions has to lie in its dual space, which is by \cite{BuffaCiarlet:01b} $[\Hv^{-1/2}_{\bot}(\opcurl^0,\Gamma_N)]^* = \Hv^{-1/2}_{\parallel,00}(\opdiv,\Gamma_N)$.
The trace theorem in $\Hv_{0,\Gamma_D}(\opcurl)$ ensures the bound
\begin{equation}
\langle \tv_{N,\tang}, \vv_\tang\rangle_{ \Hv^{-1/2}_{\parallel,00}(\opdiv,\Gamma_N)\times \Hv^{-1/2}_{\bot}(\opcurl^0,\Gamma_N)}
\leq c \|\tv_{N,\tang}\|_{ \Hv^{-1/2}_{\parallel,00}(\opdiv,\Gamma_N)} \|\vv\|_{\Hv(\opcurl)}.
\end{equation}
Let us shortly comment on the condition $\tauv_{\nv\tang} = \tv_{N,\tang} \in \Hv^{-1/2}_{\parallel,00}(\opdiv,\Gamma_N)$.
 This means that the given normal-tangential (shear) stress has to allow for a distributional surface divergence of some kind, which includes a continuity assumption on the in-plane normal across boundary edges. As the normal-tangential (shear) component of the proposed stress finite elements does not satisfy this condition, the finite element method is nonconforming, see Section~\ref{sec:fem}. Similarly, also the given shear stress $\tv_{N,\tang}$ does not need to satisfy any continuity assumptions in the finite element setting.

\subsection{Stability of bilinear forms}

To apply the theory on mixed systems by \cite{BoffiBrezziFortin:13}, we have to show 
\begin{itemize}
	\item boundedness of $\bilA(\cdot,\cdot)$ and $\bilB(\cdot,\cdot)$,
	\item coercivity of $\bilA(\cdot,\cdot)$ on the kernel space $\opker(\opB)$, and
	\item inf-sup stability of $\bilB(\cdot,\cdot)$.
\end{itemize}
There, the kernel space $\opker(\opB)$ is defined by
\begin{equation}
	\opker(\opB) := \{ \taut \in \spaceSigma: \bilB(\taut, \vv) = 0\quad \forall \vv \in \spaceV\}. \label{eq:kerB}
\end{equation}
The conditions on $\bilA(\cdot,\cdot)$ will follow rather quickly, assuming the elasticity matrix $\Atel$ to be regular, which is true for compressible materials with $\nu < \nu_0 < 1/2$. In the case of nearly incompressible materials with $\nu \to 1/2$, a refined analysis comparable to \cite[Chapter~5]{Sinwel:09} has to be carried out, which is not done in the scope of the present work. The estimates on $\bilB(\cdot,\cdot)$ are more involved.

\begin{lemma} \label{lemma:a}
The bilinear form $\bilA(\cdot,\cdot)$ is bounded on $\spaceSigma = \Ht_{0,\Gamma_N}(\opdiv\opdivv)$. Moreover, it is coercive on the kernel space $\opker(\opB)$ from \eqref{eq:kerB}, there exists a constant $c_a$ independent of $\taut$ such that
\begin{equation}
	\bilA(\taut,\taut) \geq c_a \|\taut\|_{\Ht(\opdiv\opdivv)}^2 \qquad \forall \taut \in \opker(\opB).
\end{equation}
\end{lemma}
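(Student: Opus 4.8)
The plan is to treat the two assertions—boundedness and kernel coercivity—separately. Boundedness of $\bilA(\cdot,\cdot)$ is the easy half: for smooth $\sigmat,\taut$ one has $|\bilA(\sigmat,\taut)| = |\int_\Omega \Atel\sigmat:\taut\,d\xv| \leq \|\Atel\|_{\Lt^\infty}\|\sigmat\|_{\Lt^2}\|\taut\|_{\Lt^2} \leq \|\Atel\|_{\Lt^\infty}\|\sigmat\|_{\Ht(\opdiv\opdivv)}\|\taut\|_{\Ht(\opdiv\opdivv)}$, since the $\Lt^2$-norm is controlled by the $\Ht(\opdiv\opdivv)$-norm by construction of \eqref{eq:defnormstress}. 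By density of $\Ct^\infty_{sym}$ (resp.\ $\Ct^\infty_{sym,0,\Gamma_N}$) this extends to all of $\spaceSigma$.

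For coercivity on $\opker(\opB)$ the key observation is that the second (seminorm) term in \eqref{eq:defnormstress} is controlled by the $\Lt^2$-norm \emph{for kernel elements}, so that on $\opker(\opB)$ the full $\Ht(\opdiv\opdivv)$-norm is equivalent to the $\Lt^2$-norm, and then $\bilA$ is coercive simply because $\Atel$ is uniformly positive definite (valid since $\nu < \nu_0 < 1/2$). So the first step is to unwind the kernel condition: $\taut \in \opker(\opB)$ means $\bilB(\taut,\vv) = 0$ for all $\vv \in \spaceV = \Hv_{0,\Gamma_D}(\opcurl)$. Using the regular decomposition (Theorem~\ref{theo:regulardec}), any such $\vv$ splits as $\vv = \zv + \nabla\phi$ with $\zv \in \Hv^1_{0,\Gamma_D}$, $\phi \in H^1_{0,\Gamma_D}$. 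Testing $\bilB(\taut,\cdot)$ against gradients $\nabla\phi$ (using a smooth approximation $\varphi \in C^\infty \cap H^1_{0,\Gamma_D}$, or rather $H^2$-gradients after a second density/approximation argument) and recalling that $\bilB(\taut,\nabla\varphi) = -\int_\Omega \taut:\epst(\nabla\varphi)\,d\xv + \int_{\partial\Omega}\tau_{nn}\frac{\partial\varphi}{\partial n}\,ds$, the vanishing of $\bilB$ on such test functions forces the numerator of the supremum term in \eqref{eq:defnormstress} to be zero. Hence that seminorm term vanishes, and $\|\taut\|_{\Ht(\opdiv\opdivv)}^2 = \|\taut\|_{\Lt^2}^2$ for $\taut \in \opker(\opB)$. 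Then $\bilA(\taut,\taut) = \int_\Omega \Atel\taut:\taut\,d\xv \geq c_a\|\taut\|_{\Lt^2}^2 = c_a\|\taut\|_{\Ht(\opdiv\opdivv)}^2$, with $c_a$ the ellipticity constant of $\Atel$.

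The main obstacle I expect is the density/regularity matching in the middle step: the supremum in \eqref{eq:defnormstress} runs over $\varphi \in H^2 \cap H^1_{0,\Gamma_D}$, whereas the kernel condition is phrased with test functions $\vv \in \Hv_{0,\Gamma_D}(\opcurl)$, whose gradient parts $\nabla\phi$ have only $\phi \in H^1_{0,\Gamma_D}$. One must argue that it suffices to test against gradients of $H^2$ functions—equivalently, that $\{\nabla\varphi : \varphi \in H^2 \cap H^1_{0,\Gamma_D}\}$ is dense in $\{\nabla\phi : \phi \in H^1_{0,\Gamma_D}\}$ with respect to the $\Hv(\opcurl)$-norm (these gradients are curl-free, so the $\Hv(\opcurl)$-norm reduces to the $\Lv^2$-norm of the gradient), which follows from density of $C^\infty\cap H^1_{0,\Gamma_D}$ in $H^1_{0,\Gamma_D}$ together with the definition of $\bilB(\taut,\cdot)$ as a bounded functional on $\spaceV$. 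One also needs to be slightly careful that $\bilB(\taut,\cdot)$ is a well-defined bounded functional on all of $\spaceV$ for $\taut \in \spaceSigma$ (not just smooth $\taut$), which is exactly what the trace theorem~\ref{theo:tracehdivdiv} and the identifications of the boundary terms in the previous subsection provide; this should be invoked rather than re-proved. The rest is routine.
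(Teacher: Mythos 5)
Your proposal is correct and takes essentially the same route as the paper: boundedness via Cauchy--Schwarz in $\Lt^2$, and kernel coercivity by noting that the seminorm term in \eqref{eq:defnormstress} vanishes because $\nabla \varphi \in \Hv_{0,\Gamma_D}(\opcurl)$ for every $\varphi \in H^2\cap H^1_{0,\Gamma_D}$, so the kernel condition $\bilB(\taut,\nabla\varphi)=0$ applies directly and $\|\taut\|_{\Ht(\opdiv\opdivv)}=\|\taut\|_{\Lt^2}$ on $\opker(\opB)$. The density ``obstacle'' you anticipate is not actually needed in this direction (the supremum runs over a subset of the admissible kernel test functions, so no approximation of $H^1_{0,\Gamma_D}$ gradients by $H^2$ gradients is required), and the regular decomposition is likewise superfluous; the paper's proof is your argument without those detours.
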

\begin{proof}
Boundedness of $\bilA(\cdot,\cdot)$ in $\Ht(\opdiv\opdivv)$ is clear, since $\bilA(\cdot,\cdot)$ is a ($\Atel$-scaled) inner product on $\Lt^2$ and $\Ht(\opdiv\opdivv)$ is a subspace of $\Lt^2$. Obviously, $\bilA(\cdot,\cdot)$ is also coercive with respect to the $\Lt^2$ norm,
\begin{equation}
	\bilA(\taut,\taut) \geq c_a \|\taut\|_{\Lt^2}^2 \qquad \forall \taut \in \opker(\opB).
\end{equation}
To get coercivity with respect to the $\Ht(\opdiv\opdivv)$ norm, we need to show that the additional supremum term in \eqref{eq:defnormstress} vanishes for $\taut \in \opker(\opB)$. Since for $\varphi \in H^1_{0,\Gamma_D}$ we have $\nabla \varphi \in \spaceV = \Hv_{0,\Gamma_D}(\opcurl)$, we directly see from the definition of the kernel space \eqref{eq:kerB},
\begin{equation}
\sup_{\varphi \in H^2\cap H^1_{0,\Gamma_D}} \frac{\int_\Omega \taut : \epst(\nabla \varphi) d\xv - \langle \tau_{nn}, \frac{\partial \varphi}{\partial n} \rangle}{\|\nabla \varphi\|_{\Lv^2}} 
= \sup_{\varphi \in H^2\cap H^1_{0,\Gamma_D}} \frac{\bilB(\taut, \nabla \varphi)}{\|\nabla \varphi\|_{\Lv^2}}  = 0.
\end{equation}
This concludes the proof.
\end{proof}

Next, we treat boundedness of the bilinear form $\bilB(\cdot,\cdot)$.

\begin{lemma}
The bilinear form $\bilB: \Ht_{0,\Gamma_N}(\opdiv\opdivv) \times \Hv_{0,\Gamma_D}(\opcurl)$ is bounded.
\end{lemma}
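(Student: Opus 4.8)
The plan is to bound $\bilB(\taut,\vv)$ for $\taut \in \Ht_{0,\Gamma_N}(\opdiv\opdivv)$ and $\vv \in \Hv_{0,\Gamma_D}(\opcurl)$ by exploiting the regular decomposition of $\vv$ (Theorem~\ref{theo:regulardec}) and treating the two pieces separately with the two representations \eqref{eq:defineBcontI} and \eqref{eq:defineBcontII} of $\bilB$. Since both spaces are defined as closures of smooth functions, it suffices to prove the bound for $\taut \in \Ct^\infty_{sym,0,\Gamma_N}$ and $\vv$ smooth (with $\vv_\tang = 0$ on $\Gamma_D$); the general case follows by density, using that the decomposition constants are uniform. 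Write $\vv = \zv + \nabla\varphi$ with $\zv \in \Hv^1_{0,\Gamma_D}(\Omega)$, $\varphi \in H^1_{0,\Gamma_D}(\Omega)$, and $\|\zv\|_{\Hv^1} + \|\varphi\|_{H^1} \leq c\|\vv\|_{\Hv(\opcurl)}$.

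For the gradient part $\nabla\varphi$, I would use the first representation \eqref{eq:defineBcontI}: $\bilB(\taut,\nabla\varphi) = -\int_\Omega \taut : \epst(\nabla\varphi)\,d\xv + \int_{\partial\Omega}\tau_{nn}\tfrac{\partial\varphi}{\partial n}\,ds$. This is, up to normalization, precisely the numerator of the supremum defining the seminorm in \eqref{eq:defnormstress}, so it is bounded by $\|\taut\|_{\Ht(\opdiv\opdivv)}\|\nabla\varphi\|_{\Lv^2} \leq c\,\|\taut\|_{\Ht(\opdiv\opdivv)}\|\vv\|_{\Hv(\opcurl)}$. (One small point: the supremum in \eqref{eq:defnormstress} is over $H^2 \cap H^1_{0,\Gamma_D}$, while $\varphi$ is only in $H^1_{0,\Gamma_D}$; for smooth $\taut$ the bilinear form $\bilB(\taut,\nabla\,\cdot\,)$ is a bounded functional of $\nabla\varphi$ in $\Lv^2$ that agrees with the supremand on the dense subset $H^2 \cap H^1_{0,\Gamma_D}$, hence the bound extends.)

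For the regular part $\zv \in \Hv^1_{0,\Gamma_D}(\Omega)$, I would use the second representation \eqref{eq:defineBcontII}: $\bilB(\taut,\zv) = \int_\Omega \opdivv\taut \cdot \zv\,d\xv - \int_{\partial\Omega}\tauv_{\nv\tang}\cdot\zv_\tang\,ds$. Here I do not want to use $\opdivv\taut$ directly (it is not controlled by the $\Ht(\opdiv\opdivv)$ norm). Instead, integrate by parts once more to return to the symmetric form: for smooth $\taut$ and $\zv$, $\bilB(\taut,\zv) = -\int_\Omega \taut : \epst(\zv)\,d\xv + \int_{\partial\Omega}\tau_{nn} z_n\,ds$. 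The volume term is bounded by $\|\taut\|_{\Lt^2}\|\epst(\zv)\|_{\Lt^2} \leq \|\taut\|_{\Ht(\opdiv\opdivv)}\|\zv\|_{\Hv^1}$. The boundary term is the obstacle: $\tau_{nn}$ lives only in $H^{-1/2}_n(\Gamma_i)$ on each face, so I pair it against $z_n|_{\Gamma_i}$ face by face, and I must check that $z_n$ restricted to each face $\Gamma_i$ lies in $H^{1/2}_n(\Gamma_i)$ with norm controlled by $\|\zv\|_{\Hv^1(\Omega)}$. Since $\zv \in \Hv^1_{0,\Gamma_D}(\Omega)$ we have $z_n \in H^{1/2}(\Gamma_i)$ on each face; the content of $H^{1/2}_n(\Gamma_i)$ is that it coincides (for $\Gamma_D = \emptyset$ or $\Gamma_D = \partial\Omega$, and piecewise in general) with $H^{1/2}(\Gamma_i)$ resp.\ $H^{1/2}_{00}(\Gamma_i)$ without cross-edge continuity, and because $\zv$ vanishes on $\Gamma_D$ its normal trace has the required behavior near the edges bordering $\Gamma_D$. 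Summing the finitely many face contributions gives $\bigl|\int_{\partial\Omega}\tau_{nn} z_n\bigr| \leq c\,\|\taut\|_{\Ht(\opdiv\opdivv)}\,\|\zv\|_{\Hv^1(\Omega)}$.

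Combining, $|\bilB(\taut,\vv)| \leq |\bilB(\taut,\zv)| + |\bilB(\taut,\nabla\varphi)| \leq c\,\|\taut\|_{\Ht(\opdiv\opdivv)}\bigl(\|\zv\|_{\Hv^1} + \|\nabla\varphi\|_{\Lv^2}\bigr) \leq c\,\|\taut\|_{\Ht(\opdiv\opdivv)}\|\vv\|_{\Hv(\opcurl)}$, and density in both arguments finishes the proof. I expect the main obstacle to be the rigorous handling of the face-by-face boundary pairing $\int_{\partial\Omega}\tau_{nn} z_n\,ds$ — specifically verifying that the normal trace of $\zv \in \Hv^1_{0,\Gamma_D}(\Omega)$ on each polyhedral face belongs to the (delicate, piecewise-defined, edge-sensitive) space $H^{1/2}_n(\Gamma_i)$ dual to where $\tau_{nn}$ lives, with a bound uniform in $\zv$; this is where the Dirichlet boundary condition on $\zv$ and the geometric assumptions on $\Gamma_D$ from Section~\ref{sec:domain} are actually used. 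The manipulations with the two forms of $\bilB$ and the density arguments are routine by comparison.
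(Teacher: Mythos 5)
Your overall strategy coincides with the paper's: take smooth representatives, apply the regular decomposition $\vv=\zv+\nabla\varphi$ of Theorem~\ref{theo:regulardec}, bound the gradient part by the supremum (seminorm) term in \eqref{eq:defnormstress} times $\|\nabla\varphi\|_{\Lv^2}$, and bound the volume part of the $\zv$-contribution by $\|\taut\|_{\Lt^2}\|\epst(\zv)\|_{\Lt^2}$. The gap is in your treatment of the boundary term $\int_{\partial\Omega}\tau_{nn}\,z_n\,ds$: what you identify as the main obstacle is in fact no obstacle at all, and the machinery you propose for it is the one step that does not go through. The term vanishes identically because the two spaces carry \emph{complementary} essential boundary conditions: $\zv\in\Hv^1_{0,\Gamma_D}(\Omega)$ gives $z_n=0$ on $\Gamma_D$, while $\taut$ is smooth in $\Ct^\infty_{sym,0,\Gamma_N}$ (resp.\ a limit of such fields), so $\tau_{nn}=0$ on $\Gamma_N$; hence $\int_{\partial\Omega}\tau_{nn}z_n\,ds=\int_{\Gamma_D}\tau_{nn}\cdot 0\,ds+\int_{\Gamma_N}0\cdot z_n\,ds=0$. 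This cancellation is precisely how the paper's proof disposes of the boundary term, and it is the reason the spaces $\Ht_{0,\Gamma_N}(\opdiv\opdivv)$ and $\Hv_{0,\Gamma_D}(\opcurl)$ are matched in this way.

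By contrast, the face-by-face duality pairing you propose rests on the claim that the normal trace $z_n|_{\Gamma_i}$ of an $\Hv^1_{0,\Gamma_D}$ field belongs to $H^{1/2}_n(\Gamma_i)$ with $\|z_n\|_{H^{1/2}_n(\Gamma_i)}\leq c\,\|\zv\|_{\Hv^1(\Omega)}$. That claim is not established anywhere and is genuinely delicate: $H^{1/2}_n(\Gamma_i)$ is defined as the set of normal derivatives $\partial\tilde w/\partial n$ of potentials $\tilde w\in H^2\cap H^1_{0,\Gamma_D}$, and the dual norm used for $\tau_{nn}$ is taken with respect to $\|\nabla\tilde w\|_{H^1}$, so to bound the pairing you would have to construct, for every $\zv$, an $H^2$ potential $\tilde w$ with $\partial\tilde w/\partial n=z_n$ on $\Gamma_i$ and $\|\nabla\tilde w\|_{H^1}\leq c\,\|\zv\|_{\Hv^1}$ --- an inverse-trace/norm-equivalence result for a space the paper explicitly says has not been analyzed for general nontrivial $\Gamma_D$. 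So, as written, that step would stall; the repair is simply to delete it and invoke the vanishing of the boundary term. Everything else in your argument (the handling of $\bilB(\taut,\nabla\varphi)$ via the seminorm, the density remark, and the final combination of the two estimates) matches the paper's proof.
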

\begin{proof}
Let $\taut \in \spaceSigma = \Ht_{0,\Gamma_N}(\opdiv\opdivv)$ and $\vv \in \spaceV = \Hv_{0,\Gamma_D}(\opcurl)$ be smooth, such that the integrals in \eqref{eq:defineBcontI} are well defined. We use the regular decomposition \eqref{eq:regulardec} $\vv = \zv + \nabla p$ with $\zv \in \Hv^1_{0,\Gamma_D}$ and $p \in H^1_{0,\Gamma_D}$, then
\begin{eqnarray}
	\bilB(\taut, \vv) &=& -\int_{\Omega} \taut : \epst(\zv)\, d\xv + \int_{\Gamma_D} \tau_{nn} \underbrace{z_n}_{=0}\, ds \\
		& & \int_{\Omega} \opdiv\taut \cdot \nabla p\, d\xv - \int_{\Gamma_N} \tau_{n\tang} \frac{\partial p}{\partial \tang}\, ds .
\end{eqnarray}
Cauchy's inequality in $\Lt^2$ and density ensure
\begin{eqnarray}
	\bilB(\taut, \vv) &\leq& \|\taut\|_{\Lt^2} \|\epst(\zv)\|_{\Lt^2}\\
		& & + \sup_{\varphi \in H^2 \cap H^1_{0,\Gamma_D}} \frac{ \int_{\Omega} \opdiv\taut \cdot \nabla \varphi\, d\xv - \int_{\Gamma_N} \tau_{n\tang} \frac{\partial \varphi}{\partial \tang}\, ds}{\|\nabla \varphi\|_{\Lt^2}} \|\nabla p\|_{\Lt^2} \\
		&=& \|\taut\|_{\Lt^2} \|\epst(\zv)\|_{\Lt^2}\\
		& & + \sup_{\varphi \in H^2 \cap H^1_{0,\Gamma_D}} \frac{ \int_{\Omega} \taut : \epst(\nabla \varphi)\, d\xv - \int_{\Gamma_D} \tau_{nn} \frac{\partial \varphi}{\partial n}\, ds}{\|\nabla \varphi\|_{\Lt^2}} \|\nabla p\|_{\Lt^2} .
\end{eqnarray}
We use the bound $\|\zv\|_{\Hv^1} + \|\nabla p\|_{\Lt^2} \leq c \|\vv\|_{\Hv(\opcurl)}$, and arrive at
\begin{equation}
\bilB(\taut, \vv) \leq c \|\taut\|_{\Ht(\opdiv\opdivv)} \|\vv\|_{\Hv(\opcurl)}.
\end{equation}
\end{proof}

This continuity result allows us to extend the bilinear form from smooth functions to the whole of $\Ht_{0,\Gamma_N}(\opdiv\opdivv) \times \Hv_{0,\Gamma_D}(\opcurl)$ in the sense of a distributional divergence operator.

\begin{lemma} \label{lem:binfsup}
The bilinear form $\bilB: \Ht_{0,\Gamma_N}(\opdiv\opdivv) \times \Hv_{0,\Gamma_D}(\opcurl)$ is inf-sup stable, for any $\vv \in \spaceV = \Hv_{0,\Gamma_D}(\opcurl)$ there exists some $\sigmat \in \spaceSigma= \Ht_{0,\Gamma_N}(\opdiv\opdivv)$ such that
\begin{equation}
	\bilB(\sigmat, \vv) \geq c_b \|\sigmat\|_{\Ht(\opdiv\opdivv)} \|\vv\|_{\Hv(\opcurl)}.
\end{equation}
The constant $c_b > 0$ is independent of $\vv$.
\end{lemma}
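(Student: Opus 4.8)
The plan is to construct, for a given $\vv \in \spaceV = \Hv_{0,\GammaD}(\opcurl)$, an explicit stress $\sigmat \in \spaceSigma$ realizing the inf-sup condition. I would use the regular decomposition of Theorem~\ref{theo:regulardec}, $\vv = \zv + \nabla \phi$ with $\zv \in \Hv^1_{0,\GammaD}(\Omega)$, $\phi \in H^1_{0,\GammaD}(\Omega)$, and $\|\zv\|_{\Hv^1} + \|\nabla\phi\|_{\Lv^2} \leqc \|\vv\|_{\Hv(\opcurl)}$. Looking at the two representations of $\bilB$ in \eqref{eq:defineBcontI}--\eqref{eq:defineBcontII}, the natural candidate is $\sigmat = \epst(\zv) \in \Lt^2_{sym}(\Omega)$: then, since $z_n = 0$ on $\GammaD$ and $\sigmat$ vanishes appropriately to be in $\Ht_{0,\GammaN}(\opdiv\opdivv)$ (this membership must be checked, but $\epst(\zv)$ with $\zv \in \Hv^1_{0,\GammaD}$ mimics the construction in the inverse trace theorem), one gets $\bilB(\sigmat,\vv) = -\int_\Omega \epst(\zv):\epst(\zv)\,d\xv + (\text{cross terms with }\nabla\phi)$. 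The term $\int_\Omega \epst(\zv):\epst(\nabla\phi)\,d\xv$ must be absorbed, so more care is needed — one likely tests against a sign-adjusted combination or handles the gradient part separately.

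More precisely, I expect the argument to split $\bilB(\sigmat,\vv)$ using the two formulas for $\bilB$: the $\zv$-part is paired via \eqref{eq:defineBcontI} giving $-\int_\Omega \sigmat : \epst(\zv)\,d\xv$ (the boundary term dies since $z_n=0$ on $\GammaD$ and $\tau_{nn}=0$ on $\GammaN$), and the $\nabla\phi$-part is paired via \eqref{eq:defineBcontII} giving $\int_\Omega \opdivv\sigmat \cdot \nabla\phi\,d\xv - \int_{\GammaN}\sigmav_{\nv\tang}\cdot\frac{\partial\phi}{\partial\tang}\,ds$. With $\sigmat = -\epst(\zv)$ one would get $+\|\epst(\zv)\|_{\Lt^2}^2$ from the first part, but then needs to control the cross term $\int_\Omega \opdivv\epst(\zv)\cdot\nabla\phi$. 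The cleaner route: choose $\sigmat$ so that it simultaneously "sees" $\zv$ through its $\Lt^2$-content and $\nabla\phi$ through its $\opdiv\opdivv$-content. Since $\opdiv\opdivv$ applied to $\sigmat$ should detect $-\Delta\phi$ (so that $\int\opdiv\opdivv\sigmat\,\phi = \|\nabla\phi\|^2$ after integration by parts), a candidate combining $\epst(\zv)$ with the solution of an auxiliary $H^2$-type problem driven by $\phi$ — exactly the problem \eqref{eq:varH2} used in the inverse trace theorem — is what I would try. Korn's inequality on $\Hv^1_{0,\GammaD}$ converts $\|\epst(\zv)\|_{\Lt^2}$ back to $\|\zv\|_{\Hv^1}$.

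The main obstacle I anticipate is twofold. First, verifying $\sigmat \in \Ht_{0,\GammaN}(\opdiv\opdivv)$ with the correct norm bound: one must show the supremum term in \eqref{eq:defnormstress} is controlled, which for $\sigmat = \epst(\zv)$ reduces (after integration by parts, using $\varphi, \partial\varphi/\partial\tang$ vanishing on $\GammaD$) to bounding $\int_\Omega \opdivv\epst(\zv)\cdot\nabla\varphi$-type expressions — but $\opdivv\epst(\zv)$ need not be in $\Lv^2$ for merely $\zv \in \Hv^1$, so the pairing must genuinely be read as a duality against $\nabla\varphi \in \Lv^2$, giving the bound $\|\epst(\zv)\|_{\Lt^2}$ directly from Cauchy–Schwarz after moving one derivative back. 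Second, and more delicate, is handling the normal-tangential boundary term $\int_{\GammaN}\sigmav_{\nv\tang}\cdot\frac{\partial\phi}{\partial\tang}\,ds$: one needs this to either vanish or be absorbed, which should follow from $\zv \in \Hv^1_{0,\GammaD}$ and a suitable choice making $\sigmav_{\nv\tang}$ controllable on $\GammaN$; getting the constant $c_b > 0$ uniform will hinge on combining Korn's inequality, the regular-decomposition bound, and the stability of the auxiliary problem, so tracking that all constants depend only on $\Omega$, $\GammaD$, $\GammaN$ is where the bookkeeping concentrates.
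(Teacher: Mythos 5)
There is a genuine gap, and it sits exactly where your proposal leans hardest: the claim that for $\zv \in \Hv^1_{0,\GammaD}$ the tensor $\epst(\zv)$ lies in $\Ht(\opdiv\opdivv)$ with the supremum term in \eqref{eq:defnormstress} bounded by $\|\epst(\zv)\|_{\Lt^2}$ ``directly from Cauchy--Schwarz''. Cauchy--Schwarz applied to $\int_\Omega \epst(\zv):\epst(\nabla\varphi)\,d\xv$ produces $\|\epst(\zv)\|_{\Lt^2}\,\|\epst(\nabla\varphi)\|_{\Lt^2}$, but the denominator of the dual seminorm is only $\|\nabla\varphi\|_{\Lv^2}$, and the $H^2$-seminorm $\|\epst(\nabla\varphi)\|_{\Lt^2}$ cannot be controlled by $\|\nabla\varphi\|_{\Lv^2}$. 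You also cannot integrate by parts onto $\zv$, since that requires two derivatives of $\zv$; for $\zv$ merely in $\Hv^1$, $\opdiv\opdivv\epst(\zv)$ is in general only an $H^{-2}$-type distribution, not a functional on $H^1_{0,\GammaD}$, so $\epst(\zv)$ need not belong to $\Ht(\opdiv\opdivv)$ at all. Consequently the candidate $\sigmat = \pm\epst(\zv)$ (with or without a correction for the $\nabla\phi$ part) is not admissible as stated, and the remaining construction --- how to combine $\epst(\zv)$ with an auxiliary $H^2$-problem driven by $\phi$, how to kill or absorb the cross terms and the $\Gamma_N$ normal-tangential term --- is left as a plan rather than carried out, so the inf-sup lower bound $\bilB(\sigmat,\vv)\geq c\|\vv\|_{\Hv(\opcurl)}^2$ is never actually established.

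The paper's proof avoids the regular decomposition here entirely: given $\vv$, it solves one auxiliary primal problem, find $\uv \in \Hv^1\cap\Hv_{0,\GammaD}(\opcurl)$ with $\int_\Omega\epst(\uv):\epst(\tilde\vv)\,d\xv + \int_{\GammaD}u_n\tilde v_n\,ds = \int_\Omega(\opcurl\vv\cdot\opcurl\tilde\vv + \vv\cdot\tilde\vv)\,d\xv$ for all $\tilde\vv$, and sets $\sigmat = \Ctel\epst(\uv)$. The point you are missing is that membership in $\Ht_{0,\GammaN}(\opdiv\opdivv)$ and the norm bound are then read off from the variational equation itself: testing with gradients $\nabla\varphi$, $\varphi \in H^2\cap H^1_{0,\GammaD}$, the right-hand side collapses to $\int_\Omega\vv\cdot\nabla\varphi\,d\xv$ because $\opcurl\nabla\varphi = 0$, which is bounded by $\|\vv\|_{\Lv^2}\|\nabla\varphi\|_{\Lv^2}$ --- precisely the estimate that fails for your bare $\epst(\zv)$ --- and the same test functions show $\sigma_{nn}=0$ on $\Gamma_N$ in $H^{-1/2}_n$. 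Testing with $\vv$ itself then gives $\bilB(\sigmat,\vv) = \|\vv\|_{\Hv(\opcurl)}^2$. If you want to salvage your route, the auxiliary problem must be built into the definition of $\sigmat$ (as in the paper or as in the inverse trace theorem), not added afterwards as a correction to $\epst(\zv)$.
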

\begin{proof}
Let $\vv \in \Hv_{0,\Gamma_D}(\opcurl)$ be fixed. Find $\uv \in \Hv^1 \cap \Hv_{0,\Gamma_D}(\opcurl)$ as a solution to the primal elasticity problem that for all $\tilde\vv \in \Hv^1 \cap \Hv_{0,\Gamma_D}(\opcurl)$
\begin{equation}
\int_{\Omega} \epst(\uv):\epst(\tilde \vv)\,dx + \int_{\Gamma_D} u_n \tilde v_n \,d\xv=  \int_{\Omega} \opcurl \vv \cdot \opcurl \tilde \vv + \vv \cdot \tilde \vv\,dx. \label{eq:varel}
\end{equation}
This solution satisfies the following ``classical'' boundary conditions: On $\Gamma_N$, we have a free boundary, with $(\epst(\uv))_\nv = 0$, on $\Gamma_D$, the tangential displacement $\uv_\tang = 0$ is fixed, while the normal displacement satisfies $u_n = -(\epst(\uv))_{nn}$. In the variational setting, we have the combined boundary condition
\begin{equation}
 \int_{\Gamma_D} u_n \tilde v_n\,ds = -\int_{\partial \Omega} (\Ctel \epst(\uv))_{nn} \tilde v_n\,ds  \qquad \forall \tilde \vv \in \Hv^1 \cap \Hv_{0,\Gamma_D}(\opcurl), \label{eq:sigmann}
\end{equation}
where the surface integral is to be understood as a duality product.

We choose $\sigmat := \Ctel \epst(\uv)$. We have to show that $\sigmat$ lies in $\Ht_{0,\Gamma_N}(\opdiv\opdivv)$. To this end, we first prove that it satisfies the $\Ht(\opdiv\opdivv)$-essential boundary condition $\sigma_{nn} = 0$ in $H^{-1/2}_{n}(\Gamma_N)$, then we proceed to bound $\sigmat$ in the $\Ht(\opdiv\opdivv)$ norm.

The tensor field $\sigmat$ satisfies $\sigma_{nn} = 0$ in $H^{-1/2}_{n}(\Gamma_N)$ if and only if for each $\varphi \in H^{1/2}_n(\Gamma_N)$ there holds 
\begin{equation}
	\langle \sigma_{nn}, \varphi\rangle_{H^{-1/2}_{n}(\Gamma_N)\times H^{1/2}_{n}(\Gamma_N)} = 0. \label{eq:sigmann0}
\end{equation}
Due to our assumptions on $\Omega$ and $\Gamma_N$, $\varphi$ can be extended to $H^{1/2}_n(\partial\Omega)$ by zero. By definition of $H^{1/2}_n(\partial\Omega)$, there exists a $\tilde \varphi \in H^2\cap H^1_{0,\Gamma_D}$ with $\varphi = \partial\tilde\varphi/\partial n$.  Since $\nabla H^1_{0,\Gamma_D} \subset \Hv_{0,\Gamma_D}(\opcurl)$, its gradient $\nabla \tilde \varphi$ is a valid test function in \eqref{eq:sigmann}, which ensures \eqref{eq:sigmann0}.

We now want to bound $\|\sigmat\|_{\Ht(\opdiv\opdivv)}$. Standard theory for the primal problem \eqref{eq:varel} ensures
\begin{equation}
\|\sigmat\|_{\Lt^2}^2 \leq c \|\vv\|_{\Hv(\opcurl)}^2 . \label{eq:LM}
\end{equation}
We bound the supremum term in the $\Ht(\opdiv\opdivv)$ norm to show that $\sigmat \in \Ht(\opdiv\opdivv)$: 
Again, $\nabla \varphi$ is a valid test function for the variational problem \eqref{eq:varel},
\begin{eqnarray}
	\lefteqn{\sup_{\varphi \in H^2 \cap H^1_{0,\Gamma_D}} \frac{\int_\Omega \sigmat : \epst(\nabla \varphi) d\xv 
			- \int_{\partial \Omega} \sigma_{nn} \frac{\partial \varphi}{\partial \nv}\,ds}{\|\nabla \varphi\|_{\Lv^2}}}\\
&=&\sup_{\varphi \in H^2 \cap H^1_{0,\Gamma_D}} \frac{\int_\Omega \Ctel\epst(\uv) : \epst(\nabla \varphi) d\xv 
			- \int_{\Gamma_D} (\Ctel\epst(\uv))_{nn} \frac{\partial \varphi}{\partial \nv}\,ds}{\|\nabla \varphi\|_{\Lv^2}} \\
&=&\sup_{\varphi \in H^2 \cap H^1_{0,\Gamma_D}} \frac{\int_\Omega \opcurl \vv \cdot \overbrace{\opcurl \nabla \varphi}^{=0} +  \vv \cdot \nabla \varphi d\xv 
			}{\|\nabla \varphi\|_{\Lv^2}}\\
&\leq& \|\vv\|_{\Lv^2} .
\end{eqnarray}
Together with \eqref{eq:LM} this leads to the bound
\begin{equation}
	\|\sigmat\|_{\spaceSigma} \leq c \|\vv\|_{\Hv(\opcurl)}.
\end{equation}
The bilinear form $\bilB(\sigmat,\vv)$ evaluates to
\begin{eqnarray}
\bilB(\sigmat, \vv) 
& = & \int_\Omega \sigmat:\epst(\vv)\,dx - \int_{\Gamma_D} \sigma_{nn} v_n\,ds\\
& \geq &  \|\opcurl\vv\|_{\Lv^2}^2 + \|\vv\|_{\Lv^2}^2\\
& \geq &  \|\sigmat\|_{\Ht(\opdiv\opdivv)} \|\vv\|_{\Hv(\opcurl)}.
\end{eqnarray}
\end{proof}

\section{Finite element spaces and their norms} \label{sec:fem}

Let $\triang = \{T\}$ be a simplicial, (shape-)regular triangulation of $\Omega$ as defined in  \cite[Def.~5.11]{Monk:03}. We denote the set of element faces $\faces = \{F\}$. 
Any piecewise smooth vector field $\vv \in \Hv(\opcurl)$ has to be tangential continuous on element interfaces, i.e.\ $\vv_\tang$ is uniquely defined on each element interface.
In \cite{Sinwel:09,PechsteinSchoeberl:11}, it was shown that a piecewise smooth tensor $\taut \in \Ht(\opdiv \opdivv)$ is normal-normal continuous across element faces, i.e. the normal-normal component $\tau_{nn}$ is continuous.

So far, the bilinear form $\bilB(\cdot,\cdot)$ is defined only for smooth vector and tensor fields in \eqref{eq:defineBcontI}, \eqref{eq:defineBcontII}. This definition can be extended to piecewise smooth fields:
Let $\vv \in \Hv_{0,\GammaD}(\opcurl)$ and $\taut \in \Ht_{0,\GammaN}(\opdiv\opdivv)$ be piecewise smooth and tangential and normal-normal continuous on the triangularization $\triang$, respectively, then
\begin{align}
	\bilB(\taut, \vv) &= \sum_{T \in \triang} \left( \int_T \opdivv \taut \cdot \vv\, d\xv - \int_{\partial T} \tauv_{\nv\tang}\cdot \vv_\tang\, ds \right) \label{eq:evaluatediv_1}\\
	&= -\sum_{T \in \triang} \left( \int_T \taut : \epst(\vv)\, d\xv - \int_{\partial T} \tau_{nn} v_n\, ds \right). \label{eq:evaluatediv_2}
\end{align}

In \cite{Sinwel:09,PechsteinSchoeberl:11}, normal-normal continuous symmetric stress finite elements were constructed, which  were used together with N\'ed\'elec elements for the displacement. A stable finite element method was obtained. However, the method is slightly nonconforming: to be in $\Ht(\opdiv\opdivv)$, a piecewise continuous function has to be normal-normal continuous, and the normal-tangential component $\tauv_{\nv\tang}$ has to lie in the dual space of the trace space of $\Hv(\opcurl,T)$, which means  $\tauv_{\nv\tang} \in \Hv^{-1/2}_{||}(\opdiv_{\partial T})$. Then the surface integral in \eqref{eq:evaluatediv_1} can be understood as duality product and evaluated for arbitrary $\vv \in \Hv_{0,\GammaD}(\opcurl)$. However, this is a continuity restriction on $\tauv_{\nv\tang}$ at element edges, which does not hold for general $\taut$ piecewise smooth. In general, the surface vector field $\tauv_{\nv\tang}$ is discontinuous across element edges.

We choose the following finite element spaces for integer $k \geq 1$
\begin{align}
	\spaceSigma_h &:= \left\{ \taut_h \in \Lt^2(\Omega): \taut_h|_T \in \Pt^k(T), \tau_{h,nn} \mbox{ cont.}, \tau_{h,nn}|_{\GammaN} = 0 \right\};\\
	\spaceV_h &:= \left\{ \vv_h \in \Lv^2(\Omega): \vv_h|_T \in \Pv^k(T), \vv_{h,\tang} \mbox{ cont.}, \vv_{h,\tang}|_{\GammaD} = 0 \right\}.
\end{align}
Additionally, we need an $H^1$-conforming scalar finite element space $\spaceW_h$ of order $k+1$ satisfying zero boundary conditions,
\begin{equation}
	\spaceW_h := \left\{ w_h \in L^2(\Omega): w_h|_T \in P^{k+1}(T), w_h \mbox{ cont.}, w_h|_{\GammaD} = 0\right\}.
\end{equation}	
For this choice we have that $\nabla \spaceW_h \subset \spaceV_h$.

Utilizing the finite element spaces above, we may pose the finite element problem (restricting ourselves to the case of trivial essential boundary conditions $\uv_\tang = 0$ on $\Gamma_D$ and $\sigma_{nn} = 0$ on $\Gamma_N$): find $\sigmat_h \in \spaceSigma_h$ and $\uv_h \in \spaceV_h$ such that
\begin{align}
	\bilA(\sigmat_h, \taut_h) + \bilB(\taut_h, \uv_h) &= \int_{\Gamma_D} \tau_{nn} u_{D,n}\,ds && \forall \taut_h \in \spaceSigma_h, \label{eq:spph1}\\
	\bilB(\sigmat_h, \vv_h) &= \int_\Omega \fv \cdot \vv_h\,d\xv + \int_{\Gamma_N} \tv_{N,\tang}\cdot\vv_\tang\,ds&& \forall \vv_h \in \spaceV_h. \label{eq:spph2}
\end{align}

\subsection{Discrete stress norm}

While the N\'ed\'elec space $\spaceV_h$ and the continuous space $\spaceW_h$ are endowed with  $\Hv(\opcurl)$ and $H^1$ norms, respectively, we provide a discrete norm for the stress space $\spaceSigma_h$,
\begin{align}
	\|\taut_h\|_{\spaceSigma_h}^2 &:= \|\taut_h\|_{\Lt^2}^2 + \sum_{F \in \faces} h_F \|\tau_{h,nn}\|_{L^2(F)}^2  + \left(\sup_{w_h \in \spaceW_h}\frac{\bilB(\taut_h, \nabla w_h)}{\|\nabla w_h\|_{\Lv^2}}\right)^2. \label{eq:normsigmah}
\end{align}
Note that, for finite element functions $\taut_h$, the face terms in \eqref{eq:normsigmah} can be bounded by the $\Lt^2$ term (see \cite{PechsteinSchoeberl:11}). Thus, the face terms may be omitted for finite element functions, which is often done in this work. However, this is not possible for general piecewise smooth normal-normal continuous tensor fields $\taut$.

In \cite{PechsteinSchoeberl:11}, we showed stability of the problem not using the $\Hv(\opcurl)$ and discrete $\Ht(\opdiv\opdivv)$ norm \eqref{eq:normsigmah}, but using the $\Lt^2$ norm for the stresses and a broken $\Hv^1$ norm for the displacements,
\begin{align}
	\|\vv_h\|_{\Hv^1,h}^2 &= \sum_{T \in \triang} \|\epst(\vv_h)\|_{\Lt^2(T)}^2 + h^{-1}\sum_{F \in \faces} \|[v_{h,n}]\|_{L^2(F)}^2. \label{eq:normh1h}
\end{align}

\subsection{The reference element and transformations to the mesh element}

We introduce the reference tetrahedron $\hat T = \{ \hat \xv = (\hat x_1, \hat x_2, \hat x_3): \hat x_i>0, \hat x_1 + \hat x_2 + \hat x_3 < 1\}.$ Barycentric coordinates on $\hat T$ are given by
\begin{equation}
\hat \lambda_1 = \hat x_1,\quad \hat \lambda_2 = \hat x_2,\quad \hat \lambda_3 = \hat x_3,\quad \hat \lambda_4 = 1-(\hat x_1+\hat x_2 + \hat x_3). 
\end{equation}
For any element $T \in \triang$, let
\begin{equation}
	\Phi_T: \hat T \to T,\ \hat \xv \mapsto \xv
\end{equation}
be a smooth one-to-one mapping of the reference tetrahedron to tetrahedron $T$. The Jacobian of this transformation shall be denoted by $\Ft_T = \nabla \Phi_T$, the Jacobi determinant by $J_T = det(\Ft_T)$. The local mesh size is defined as the spectral norm of $\Ft_T$, $h_T = |\Ft_T|_s$. For a face $F$ and an edge $E$, let $J_F$, $J_E$ denote the transformation of measures of the mappings $\hat F \to F$, $\hat E \to E$. For the normal $\nv_F$ to face $F$ and the tangential vector $\tang_E$ to some edge $E$ we have
\begin{equation}
	\nv_F = J_T/J_F \Ft_T^{-T} \hat \nv_{\hat F}, \qquad \tang_E = 1/J_E \Ft_T \hat \tang_{\hat E}.
\end{equation}

The finite element basis functions are defined on the reference tetrahedron, and mapped to an element $T$ by a conforming transformation. A conforming transformation has to preserve the degrees of freedom of the finite element. While $H^1$ conforming elements can be transformed directly, we need the tangential-trace preserving covariant transformation for $\Hv(\opcurl)$ conforming elements, and a transformation which preserves the normal-normal trace for the stress elements,
\begin{align}
w_h(\xv) &= \hat w(\hat \xv),\\
\vv_h(\xv) &= \Ft_T^{-T} \hat \vv_h(\hat \xv),\\
\taut_h(\xv) &= 1/J_T^2 \Ft_T \hat \taut_h(\hat \xv) \Ft_T^T. \label{eq:transhdivdiv}
\end{align}
By application of basic calculus one can see that gradient and strain operators transform as
\begin{align}
\nabla w_h &= \Ft_T^{-T} \hat \nabla \hat w_h,\\
\epst(\vv_h) &= \Ft_T^{-T} \hat \epst(\hat \vv_h) \Ft_T^{-1}.
\end{align}

\section{Interpolation operators}

The a-priori error analysis of the proposed finite element method relies on interpolation operators for the finite element spaces. Subsequently, we recall the nodal interpolation operators for the spaces $\spaceW_h$ and $\spaceV_h$, as well as the Cl\'ement quasi-interpolation operator for the piecewise linear, continuous finite element space. Their definitions and according estimates can be found in \cite[Sect.~5.5 and 5.6]{Monk:03}. Additionally, we present an error estimate for the $\Hv(\opcurl)$-interpolant in the broken $\Hv^1$ norm. In Section~\ref{sec:interpol_stress} we define an interpolation operator for the stress space and give error estimates in the discrete stress norm $\|\cdot\|_{\spaceSigma_h}$. 

\subsection{Commuting interpolation operators for $H^1$ and $\Hv(\opcurl)$ and the Cl\'ement quasi-interpolation operator}


Let $\IntW$, $\IntV$ be the nodal interpolation operators defined on the finite element spaces  $\spaceW_h, \spaceV_h$.
These interpolation operators are based on the degrees of freedom of the finite element spaces. They are defined in such a way that they commute with the gradient operator,
\begin{equation}
\IntV \nabla w = \nabla \IntW w.
\end{equation}
Note that the interpolation operators are not well-defined for general functions in $H^1$ and $\Hv(\opcurl)$, respectively, but only for smoother functions allowing for point values or mean values of the tangential component along element edges. 
The Cl\'ement quasi-interpolation operator $\Clement$ is defined for general functions in $H^1$, as it uses mean values instead of nodal values. It is continuous in $H^1$.


The following interpolation error estimates are well known: 
\begin{theorem}
For $w \in H^{s+1}$ and $\vv \in \Hv^s, \opcurl \vv \in \Hv^s$ the following interpolation error estimates for $1 \leq s \leq k$ hold
\begin{align}
\|w - \IntW w\|_{H^1(\Omega)} & \leq c \left( \sum_{T \in \triang} h_T^{2s} \|w\|_{H^{s+1}(T)}^2 \right)^{1/2},\\
\|\vv - \IntV \vv\|_{\Hv(\opcurl)} & \leq c  \left(\sum_{T \in \triang} h_T^{2s} (\|\vv\|_{\Hv^{s}(T)}^2+\|\opcurl \vv\|_{\Hv^{s}(T)}^2) \right)^{1/2},\\
\|w - \Clement w\|_{L^2(\Omega)} & \leq c \left( \sum_{T \in \triang} h_T^{2} \|w\|_{H^{1}(D_T)}^2 \right)^{1/2}.
\end{align}
Here, $D_T$ denotes the neighbourhood of element $T$, i.e.\ the union of all elements sharing at least a vertex with $T$. The constants $c$ are independent of the mesh size $h$.
\end{theorem}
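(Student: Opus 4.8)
The statement collects three standard interpolation error estimates, so the proof is really a matter of assembling known results; I would structure it as three separate claims sharing one underlying principle: the Bramble--Hilbert lemma on the reference element combined with the appropriate scaling laws for the covariant and nodal transformations introduced in Section~\ref{sec:fem}.

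\medskip

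The plan is as follows. For the scalar estimate, I would argue elementwise: on the reference element $\hat T$, the nodal interpolation operator $\hat{\mathcal I}_{\spaceW}$ is well-defined and bounded on $H^{s+1}(\hat T)$ for $s \geq 1$ (since $H^{s+1}(\hat T) \hookrightarrow C(\overline{\hat T})$ in three dimensions once $s+1 > 3/2$, i.e. $s\geq 1$), and it reproduces polynomials of degree $k$. The Bramble--Hilbert lemma then gives $\|\hat w - \hat{\mathcal I}_{\spaceW}\hat w\|_{H^1(\hat T)} \leq c\,|\hat w|_{H^{s+1}(\hat T)}$. Transforming back to $T$ via $w_h(\xv)=\hat w(\hat\xv)$ and $\nabla w_h = \Ft_T^{-T}\hat\nabla\hat w_h$, and using the shape-regularity bounds $|\Ft_T|_s \eqc h_T$, $|\Ft_T^{-1}|_s \eqc h_T^{-1}$, $|J_T|\eqc h_T^3$, yields the factor $h_T^{s}$ in front of $\|w\|_{H^{s+1}(T)}$; squaring and summing over $T\in\triang$ gives the stated estimate. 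For the $\Hv(\opcurl)$ estimate I would do the same, now using that $\IntV$ commutes with the gradient (so $\opcurl$ of the interpolant is controlled by the $\Hv(\opcurl)$-conforming interpolant of $\opcurl\vv$ on the next space in the de~Rham sequence), that $\IntV$ reproduces $\Pv^k(T)$, and the covariant scaling $\vv_h=\Ft_T^{-T}\hat\vv_h$, $\epst(\vv_h)=\Ft_T^{-T}\hat\epst(\hat\vv_h)\Ft_T^{-1}$; the commuting property is precisely what lets one estimate $\|\opcurl(\vv-\IntV\vv)\|_{\Lv^2(T)}$ by $h_T^s\|\opcurl\vv\|_{\Hv^s(T)}$ rather than losing a derivative. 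For the Cl\'ement estimate I would simply quote the standard construction: $\Clement$ is built from local $L^2$-projections onto $P^1$ on vertex patches, it reproduces global $P^1$, and the standard patch-wise Bramble--Hilbert argument gives $\|w-\Clement w\|_{L^2(T)}\leq c\,h_T\|w\|_{H^1(D_T)}$; squaring and summing, and using that shape-regularity bounds the number of elements in any $D_T$ uniformly, gives the result.

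\medskip

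The only genuinely delicate point --- and the one I would flag as the main obstacle --- is the $\opcurl$ term in the second estimate: a naive elementwise Bramble--Hilbert bound on $\IntV$ would require $\vv\in\Hv^{s+1}$, one order too much. The resolution is the commuting diagram property $\IntV\nabla = \nabla\,\IntW$ together with the full de~Rham complex: one uses that $\opcurl\IntV\vv$ equals the $\Hv(\opdiv)$-conforming (Raviart--Thomas) interpolant of $\opcurl\vv$, which is bounded and polynomial-preserving on $\Hv^s(T)$ for $s\geq 1$. This is exactly the argument carried out in \cite[Sect.~5.5]{Monk:03}, so in practice I would invoke it rather than reprove it, but it is the place where the estimate genuinely uses structure beyond plain approximation theory. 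Everything else is routine scaling, and I would keep the write-up short by citing \cite[Sect.~5.5 and 5.6]{Monk:03} for the detailed constructions and estimates, merely indicating how the reference-element bounds transform to give the mesh-dependent powers $h_T^s$ and $h_T$.
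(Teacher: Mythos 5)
Your proposal is correct and takes essentially the same route as the paper, which states these estimates as well known and simply refers to Monk (Sections 5.5 and 5.6) for exactly the scaling/Bramble--Hilbert and commuting-diagram argument you sketch. Your flagged subtlety is the right one --- the curl bound rests on $\opcurl\IntV\vv$ being the $\Hv(\opdiv)$-conforming interpolant of $\opcurl\vv$, i.e.\ commutation with the curl rather than the gradient, as your clarifying sentence indeed states.
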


In \cite{Sinwel:09}, we showed that the N\'ed\'elec interpolator $\IntV$ also approximates in the broken $H^1$ norm,
\begin{theorem}
Let  $\vv \in \Hv^s, \opcurl \vv \in \Hv^s$ satisfy $\vv \in \Hv^{s+1}(T)$ for all elements $T \in \triang$. Then  the interpolation error is bounded in the broken $H^1$ norm \eqref{eq:normh1h} for $1 \leq s \leq k$
\begin{align}
\|\vv - \IntV \vv\|_{\Hv^1,h} & \leq c \left( \sum_{T \in \triang} h_T^{2s} \|\epst(\vv)\|_{\Ht^s(T)}^2 \right)^{1/2} .
\end{align}
\end{theorem}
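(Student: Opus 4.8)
The plan is to reduce the broken-$H^1$ interpolation estimate to standard scaling arguments on the reference element, exploiting the commuting diagram property and the fact that the broken norm \eqref{eq:normh1h} decomposes into a strain term and a normal-jump term. First I would recall that the N\'ed\'elec interpolant $\IntV$ commutes with the gradient, so $\opcurl(\vv - \IntV\vv) = \opcurl\vv - \IntV'(\opcurl\vv)$ relates to a lower-order interpolation, but more relevantly for the broken $H^1$ norm I would work element by element. On each $T$, write $\|\epst(\vv - \IntV\vv)\|_{\Lt^2(T)}$ and transform to the reference element $\hat T$ via the covariant transformation $\vv_h(\xv) = \Ft_T^{-T}\hat\vv_h(\hat\xv)$, under which $\epst(\vv_h) = \Ft_T^{-T}\hat\epst(\hat\vv_h)\Ft_T^{-1}$. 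Shape-regularity gives $|\Ft_T|_s \simeq h_T$ and $|\Ft_T^{-1}|_s \simeq h_T^{-1}$, and $|J_T| \simeq h_T^3$, so the strain term scales like $h_T$ relative to a reference-element seminorm of $\hat\vv - \IntV\hat\vv$.

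The key step is then a Bramble--Hilbert / Deny--Lions argument on $\hat T$: the operator $\hat\vv \mapsto \hat\vv - \IntV\hat\vv$ annihilates $\Pv^k(\hat T)$ (since $\IntV$ reproduces polynomials of degree $\leq k$), so its $\Ht^1(\hat T)$-seminorm is bounded by the $\Ht^{s+1}(\hat T)$-seminorm of $\hat\vv$ for $1 \le s \le k$; pulling back to $T$ and summing over elements yields the $h_T^{2s}\|\epst(\vv)\|_{\Ht^s(T)}^2$ contribution. The subtlety here — and what I expect to be the main obstacle — is the normal-jump term $h^{-1}\sum_F \|[v_{h,n}]\|_{L^2(F)}^2$: since $\IntV\vv$ is only tangentially continuous, its normal component generally jumps across faces, and we must show this jump is controlled. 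The trick is that the jump of the \emph{full} $\vv$ vanishes only if $\vv\in\Hv^1$, but $[(\vv - \IntV\vv)_n]_F = [(\IntV\vv)_n]_F - [v_n]_F$; since $\vv$ is continuous across $F$ (being in $\Hv^{s+1}(T)$ on both sides with $s\ge 1$ actually does not give global continuity — rather one uses that the jump term of $\IntV\vv$ can be written using only the tangential degrees of freedom it already reproduces). Concretely, on each face one bounds $\|[v_{h,n}]\|_{L^2(F)}$ by a trace inequality applied to $\vv - \IntV\vv$ on each adjacent element, $\|(\vv - \IntV\vv)_n\|_{L^2(F)} \leq c(h_T^{-1/2}\|\vv - \IntV\vv\|_{\Lv^2(T)} + h_T^{1/2}\|\nabla(\vv - \IntV\vv)\|_{\Lt^2(T)})$, and then the $h^{-1}$ prefactor combines with these to give again $h_T^{2s}$ times an $\Ht^{s+1}(T)$-seminorm; one finally observes that on the reference element $\|\hat\epst(\hat\vv)\|_{\Ht^s(\hat T)}$ controls $|\hat\vv|_{\Ht^{s+1}(\hat T)}$ modulo rigid-body motions, which $\IntV$ handles, so the right-hand side collapses to $\|\epst(\vv)\|_{\Ht^s(T)}$ as claimed. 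The delicate point is ensuring the constant in the trace and Bramble--Hilbert estimates is $h$-independent, which is exactly where shape-regularity of $\triang$ and the scaling relations for $\Ft_T$, $J_F$ from Section~\ref{sec:fem} enter.

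I would then assemble the two contributions, square and sum over $T\in\triang$, and conclude
\begin{equation}
\|\vv - \IntV\vv\|_{\Hv^1,h}^2 \leq c \sum_{T\in\triang} h_T^{2s}\|\epst(\vv)\|_{\Ht^s(T)}^2,
\end{equation}
taking the square root to obtain the stated bound. The reference to \cite{Sinwel:09} suggests this is essentially a repackaging of a known result, so I would keep the exposition brief, emphasizing only the normal-jump estimate as the nonstandard ingredient and citing the covariant transformation identities and scaling relations already recorded in Section~\ref{sec:fem} for the routine steps.
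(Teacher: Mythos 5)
Your overall strategy --- elementwise covariant scaling, a Bramble--Hilbert argument on the reference tetrahedron using that $\IntV$ reproduces $\Pv^k(\hat T)$, and a scaled trace inequality for the normal-jump contribution --- is the standard one, and it is in substance what the paper relies on: the paper gives no proof of this theorem at all but defers to \cite{Sinwel:09}, where exactly this type of scaling/trace argument is carried out. Your final assembly (square, sum over $T\in\triang$, invoke shape-regularity for the constants) is fine.

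Two of your justifications need repair, though both are fixable. First, the jump term: your parenthetical claim that global continuity is unavailable and that ``the jump term of $\IntV\vv$ can be written using only the tangential degrees of freedom it already reproduces'' is not correct --- the normal trace of $\IntV\vv$ on a face is not determined by tangential degrees of freedom. What actually legitimizes your subsequent trace-inequality bound is simply that $[v_n]_F=0$ on interior faces, which follows from the theorem's \emph{global} hypothesis $\vv\in\Hv^s(\Omega)$ with $s\ge 1$, hence $\vv\in\Hv^1(\Omega)$; then $[(\vv-\IntV\vv)_n]_F$ is the difference of the two element traces of the interpolation error, and the scaled trace inequality $\|(\vv-\IntV\vv)_n\|_{L^2(F)}\le c\bigl(h_T^{-1/2}\|\vv-\IntV\vv\|_{\Lv^2(T)}+h_T^{1/2}\|\nablat(\vv-\IntV\vv)\|_{\Lt^2(T)}\bigr)$ applies elementwise (on boundary faces one bounds the trace of the error itself in the same way). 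Second, the closing step ``modulo rigid-body motions, which $\IntV$ handles'' is vague and also unnecessary: for $s\ge 1$ every derivative of $\vv$ of order $s+1\ge 2$ is a linear combination of order-$s$ derivatives of $\epst(\vv)$, via the identity $\partial_j\partial_k v_i=\partial_j(\epst(\vv))_{ik}+\partial_k(\epst(\vv))_{ij}-\partial_i(\epst(\vv))_{jk}$, so $|\vv|_{\Hv^{s+1}(T)}\le c\,\|\epst(\vv)\|_{\Ht^s(T)}$ holds outright and is scale-invariant. Combining this with the Bramble--Hilbert estimate $\|\vv-\IntV\vv\|_{\Lv^2(T)}+h_T|\vv-\IntV\vv|_{\Hv^1(T)}\le c\,h_T^{s+1}|\vv|_{\Hv^{s+1}(T)}$ yields both the strain term and, after the trace inequality with the $h^{-1}$ weight, the jump term at the stated rate with only $\|\epst(\vv)\|_{\Ht^s(T)}$ on the right. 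With these two repairs your sketch is a complete proof along the lines of the cited reference.
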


\subsection{An interpolation operator for the stress space} \label{sec:interpol_stress}

We characterize the stress interpolation operator $\IntSigma$ and we show that it approximates not only in the $\Lt^2$ norm, but also in the discrete $\Ht(\opdiv\opdivv)$ norm $\|\cdot\|_{\spaceSigma_h}$ defined in \eqref{eq:normsigmah}.

We define six constant tensor fields on the reference tetrahedron, which are linearly independent and span the space of constant symmetric tensor fields. Four of these tensors are associated to a face of the tetrahedron, each. They are denoted by $\hat \St^{\hat F_m}, m=1\dots4$. The normal-normal component of a tensor field $\hat \St^{\hat F_m}$ is constant on face $\hat F_m$, while it vanishes on all other faces,
\begin{equation}
	\hat S^{\hat F_m}_{\hat n \hat n} |_{F_i} = c\delta_{i,m} \qquad \mbox{for}\ i,m=1\dots4.
\end{equation}
The remaining two tensor fields $\hat \St^{\hat T,n}, n=1,2$ have a vanishing normal-normal component on all element faces. 
\begin{equation}
	\hat S^{\hat T,n}_{\hat n \hat n} |_{F_i} = 0 \qquad \mbox{for}\ i=1\dots4, n=1,2.
\end{equation}
Moreover, the face tensors $\hat \St^{\hat F_m}$ are orthogonal to the interior tensors $\hat \St^{\hat T,n}$ in the sense that
\begin{equation}
	\hat \St^{\hat F_m} : \hat \St^{\hat T,n} = 0 \qquad \mbox{for}\ m=1\dots4, n=1,2. \label{eq:orthogonalS}
\end{equation}
The tensor fields are given by
\begin{align}
\hat \St^{\hat F_1} &= \left( \begin{array}{ccc} -6&1&1 \\ 1&0&1 \\ 1&1&0 \end{array}\right) ,&
\hat \St^{\hat F_2} &= \left( \begin{array}{ccc} 0&1&1 \\ 1&-6&1 \\ 1&1&0 \end{array}\right) ,&
\hat \St^{\hat F_3} &= \left( \begin{array}{ccc} 0&1&1 \\ 1&0&1 \\ 1&1&-6 \end{array}\right) ,\\
\hat \St^{\hat F_4} &= \left( \begin{array}{ccc} 0&1&1 \\ 1&0&1 \\ 1&1&0 \end{array}\right) ,&
\hat \St^{\hat T,1} &= \left( \begin{array}{ccc} 0&0&-1 \\ 0&0&1 \\ -1&1&0 \end{array}\right) ,&
\hat \St^{\hat T,2} &= \left( \begin{array}{ccc} 0&-1&0 \\ -1&0&1 \\ 0&1&0 \end{array}\right) .
\end{align}

The interpolation operator $\IntSigma$ mapping any sufficiently smooth, normal-normal-continuous tensor field $\taut$ to $\IntSigma \taut \in \spaceSigma_h$, is uniquely defined by the following conditions,
\begin{itemize}
	\item on each face $F \in \faces$, 
	\begin{equation}
		\int_F J_F (\taut - \IntSigma \taut)_{nn} q\, ds = 0 \qquad \mbox{for all}\ q \in P^k(F). \label{eq:facecond}
	\end{equation}
	\item on each element $T \in \triang$
	\begin{align}
		\int_T J_T (\taut - \IntSigma \taut) : (q \Ft_T^{-T} \hat \St^{\hat F_m} \Ft_T^{-1})\, d\xv &=0 &&\mbox{for all}\ q \in P^{k-1}(T), m=1\dots4, \label{eq:innercondI}\\
		\int_T J_T (\taut - \IntSigma \taut): (q \Ft_T^{-T} \hat \St^{\hat T,n} \Ft_T^{-1})\, d\xv &=0 &&\mbox{for all}\ q \in P^{k}(T), n=1,2. \label{eq:innercondII}
	\end{align}
\end{itemize}

\begin{lemma}
The interpolation operator $\IntSigma$ is well-defined and preserves piecewise polynomials, i.e. $\IntSigma \taut_h = \taut_h$ for $\taut_h \in \spaceSigma_h$.
\end{lemma}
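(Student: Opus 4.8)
The plan is to reduce everything to the reference tetrahedron $\hat T$, prove a local unisolvence statement there, and then read off both claims.

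First I would note that the Jacobian weights in \eqref{eq:facecond}--\eqref{eq:innercondII} are tailored to the normal-normal preserving transformation \eqref{eq:transhdivdiv}: a short change of variables (using $\taut_h:(q\,\Ft_T^{-T}\hat\St\,\Ft_T^{-1}) = \tfrac{q}{J_T^2}\,\hat\taut_h:\hat\St$, and on a face $(\taut_h)_{nn} = \tfrac{1}{J_F^2}\hat\tau_{h,\hat n\hat n}$) turns \eqref{eq:facecond} into $\int_{\hat F}\hat\tau_{\hat n\hat n}\,\hat q\,d\hat s = 0$ for all $\hat q\in P^k(\hat F)$ on each of the four faces, \eqref{eq:innercondI} into $\int_{\hat T}\hat q\,(\hat\taut:\hat\St^{\hat F_m})\,d\hat\xv = 0$ for all $\hat q\in P^{k-1}(\hat T)$, and \eqref{eq:innercondII} into $\int_{\hat T}\hat q\,(\hat\taut:\hat\St^{\hat T,n})\,d\hat\xv = 0$ for all $\hat q\in P^k(\hat T)$. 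The degrees of freedom attached to a single element span a space of dimension $4\dim P^k(\hat F) + 4\dim P^{k-1}(\hat T) + 2\dim P^k(\hat T)$, which equals $6\dim P^k(\hat T) = \dim\Pt^k_{sym}(\hat T)$ by Pascal's identity $\dim P^k(\hat F) + \dim P^{k-1}(\hat T) = \dim P^k(\hat T)$. Hence it suffices to prove that the linear map from $\hat\taut\in\Pt^k_{sym}(\hat T)$ to these moments is injective.

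Assume all these moments of $\hat\taut\in\Pt^k_{sym}(\hat T)$ vanish. Since $\hat\St^{\hat F_1},\dots,\hat\St^{\hat F_4},\hat\St^{\hat T,1},\hat\St^{\hat T,2}$ form a basis of the constant symmetric tensors, write $\hat\taut = \sum_{m=1}^4 a_m\,\hat\St^{\hat F_m} + \sum_{n=1}^2 b_n\,\hat\St^{\hat T,n}$ with $a_m,b_n\in P^k(\hat T)$. By the orthogonality \eqref{eq:orthogonalS} the quantity $\hat\taut:\hat\St^{\hat T,n}$ depends only on $b_1,b_2$, and the transformed \eqref{eq:innercondII} reads $\int_{\hat T}\hat q\sum_{n'}G_{nn'}b_{n'}\,d\hat\xv = 0$ for all $\hat q\in P^k(\hat T)$, with $G$ the (nonsingular) Gram matrix of $\hat\St^{\hat T,1},\hat\St^{\hat T,2}$; testing with $\hat q = \sum_{n'}G_{nn'}b_{n'}\in P^k(\hat T)$ gives $\sum_{n'}G_{nn'}b_{n'}=0$, hence $b_1=b_2=0$. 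Next, on the face $\hat F_i$ the identity $(\hat\St^{\hat F_m})_{\hat n\hat n}|_{\hat F_i} = c\,\delta_{i,m}$ with $c\neq 0$ gives $\hat\tau_{\hat n\hat n}|_{\hat F_i} = c\,a_i|_{\hat F_i} \in P^k(\hat F_i)$, and the vanishing of its $P^k(\hat F_i)$-moments forces $a_i|_{\hat F_i}=0$; since $\hat F_i = \{\hat\lambda_i=0\}$ we may factor $a_i = \hat\lambda_i\tilde a_i$ with $\tilde a_i\in P^{k-1}(\hat T)$.

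Using \eqref{eq:orthogonalS} once more, the transformed \eqref{eq:innercondI} yields $\int_{\hat T}\hat q\sum_{m'}H_{mm'}a_{m'}\,d\hat\xv=0$ for all $\hat q\in P^{k-1}(\hat T)$ with $H$ the nonsingular Gram matrix of $\hat\St^{\hat F_1},\dots,\hat\St^{\hat F_4}$, hence $\int_{\hat T}\hat q\,a_m\,d\hat\xv = 0$ for all $\hat q\in P^{k-1}(\hat T)$; choosing $\hat q = \tilde a_m$ gives $\int_{\hat T}\hat\lambda_m\,\tilde a_m^2\,d\hat\xv = 0$, and since $\hat\lambda_m>0$ in the interior of $\hat T$ this forces $\tilde a_m=0$, so $a_m=0$ and $\hat\taut = 0$. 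This establishes local unisolvence; transforming back, $\IntSigma\taut$ is uniquely determined element by element, the face conditions involve only the single-valued trace $\tau_{nn}|_F$ (normal-normal continuity), so the element contributions are consistent across faces and $\IntSigma\taut\in\spaceSigma_h$. Finally, if $\taut_h\in\spaceSigma_h$, then $\taut_h$ itself satisfies all defining moment conditions and lies in $\spaceSigma_h$, so uniqueness yields $\IntSigma\taut_h=\taut_h$. The main obstacle is the injectivity argument, specifically the interplay of \eqref{eq:orthogonalS}, which decouples the interior conditions of the interior tensors from those of the face tensors, with the factorization $a_m=\hat\lambda_m\tilde a_m$ that exactly bridges the gap between the $P^k$ face moments and the $P^{k-1}$ interior moments; the change of variables, the dimension count, and the nonsingularity of the Gram matrices are routine.
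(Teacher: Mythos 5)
Your proof is correct and follows essentially the same route as the paper: pull the conditions back to the reference tetrahedron via the normal-normal preserving transformation, use the face moments to kill the normal-normal trace, and exploit the orthogonality \eqref{eq:orthogonalS} together with the positivity of the barycentric coordinates to decouple and invert the interior conditions. The only differences are cosmetic improvements: you make the dimension count explicit and derive the representation $a_m=\hat\lambda_m\tilde a_m$ directly, where the paper instead cites the interior shape-function basis from its earlier work.
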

\begin{proof}
We show that the conditions \eqref{eq:facecond}, \eqref{eq:innercondI}, \eqref{eq:innercondII} are unisolvent for the finite element space $\spaceSigma_h$. It is sufficient to show that \eqref{eq:facecond}, \eqref{eq:innercondI}, \eqref{eq:innercondII} applied to $\taut_h \in \spaceSigma_h$ implies $\taut_h = 0$.

We start with the face-bound conditions. On each face $F \in \faces$, we have
\begin{equation}
		\int_F J_F \tau_{h,nn} q\, ds = 0 \qquad \mbox{for all}\ q \in P^k(F). 
\end{equation}
Since $\tau_{h,nn}$ is polynomial of order $k$ on each face, this implies that $\tau_{h,nn} = 0$ on each face.

Since the normal-normal component of $ \taut_h$ vanishes on all element interfaces, on each element $T \in \triang$, $\taut_h$ is a linear combination of element-local interior shape functions. According to \cite{PechsteinSchoeberl:11}, there are two types of interior shape functions on the reference element,
\begin{align}
& \hat p_{i} \hat \lambda_m \hat \St^{\hat F_m},&& \hat p_{i}\ \mbox{basis for}\ P^{k-1}(\hat T), m=1\dots4  \label{eq:shapeI}\\
 & \hat p_{i} \hat \St^{\hat T,n},&& \hat p_{i}\ \mbox{basis for}\ P^{k}(\hat T), n=1,2. \label{eq:shapeII}
\end{align}
Transforming the integrals \eqref{eq:innercondI}, \eqref{eq:innercondII} to the reference element using the $\Ht(\opdiv\opdivv)$ conforming transformation \eqref{eq:transhdivdiv} leads to
\begin{align}
	\int_{\hat T} \hat \taut_h : ( \hat p_{i}  \hat \St^{\hat F_m})\, d\hat \xv &=0 &&\hat p_{i}\ \mbox{basis for}\  P^{k-1}(\hat T), m=1\dots4, \label{eq:innercondIhat}\\
	\int_{\hat T} \hat \taut_h : ( \hat p_{i}  \hat \St^{\hat T,n} )\, d\hat \xv &=0 &&\hat p_{i}\ \mbox{basis for}\  P^{k}(\hat T), n=1,2. \label{eq:innercondIIhat}
\end{align}
To show that $\taut_h = 0$, conditions \eqref{eq:innercondIhat}, \eqref{eq:innercondIIhat} are evaluated for all shape functions \eqref{eq:shapeI}, \eqref{eq:shapeII}, the results stored in a square but non-symmetric matrix. Since the tensor fields are orthogonal \eqref{eq:orthogonalS}, the two groups decouple, leaving two matrices of block structure,
\begin{equation}
\left[ \hat \St^{\hat F_m}:\hat \St^{\hat F_{\bar m}} \int_{\hat T} \hat \lambda_m \hat p_i \hat p_{\bar i} \,d\hat \xv\right]_{\substack{m,i\\ \bar m, \bar i}},
\qquad
\left[ \hat \St^{\hat T,n}:\hat \St^{\hat T,\bar n} \int_{\hat T} \hat p_i \hat p_{\bar i} \,d\hat \xv \right]_{\substack{n,i\\ \bar n, \bar i}}
\end{equation}
The regularity of these matrices can easily be shown using the linear independence of the tensor fields $\hat \St^{\hat F_m}, \hat \St^{\hat T,n}$, the positivity of the barycentric coordinates $\hat \lambda_m$, and the linear independence of the basis $\{\hat p_i\}$.
\end{proof}

\begin{theorem}
For $\taut \in \Hv^s$ and $1 \leq s \leq k+1$ the interpolation error is bounded by
\begin{align}
\|\taut - \IntSigma \taut\|_{\spaceSigma_h} & \leq c \left( \sum_{T \in \triang} h_T^{2s} \|\taut\|_{H^{s}(T)}^2 \right)^{1/2}.
\end{align}
\end{theorem}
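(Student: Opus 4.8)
The plan is to proceed by the usual scaling argument via the reference element, treating the three contributions in the discrete norm $\|\cdot\|_{\spaceSigma_h}$ separately. Since $\IntSigma$ is defined element-by-element through the conditions \eqref{eq:facecond}, \eqref{eq:innercondI}, \eqref{eq:innercondII}, which are themselves pulled back to $\hat T$, the natural strategy is a Bramble--Hilbert/Deny--Lions argument on $\hat T$: on the reference element the operator $\hat{\IntSigma}$ reproduces polynomials up to degree $k$, so the error is controlled by the $H^s(\hat T)$-seminorm of $\hat\taut$ for $1\le s\le k+1$, and then one transforms back, keeping careful track of the powers of $\Ft_T$ and $J_T$ produced by the $\Ht(\opdiv\opdivv)$-conforming transformation \eqref{eq:transhdivdiv} and the corresponding map for the Hessian/strain quantities.

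First I would establish boundedness of $\hat{\IntSigma}$ on $\hat T$ (e.g.\ on $\Ht^1(\hat T)$, which embeds into the space where the face moments \eqref{eq:facecond} make sense), using unisolvence from the preceding lemma together with a compactness/norm-equivalence argument on the finite-dimensional image space; combined with polynomial reproduction this gives, for each element, an estimate of the form $\|\hat\taut - \hat{\IntSigma}\hat\taut\|_{\Ht^1(\hat T)} \leq c\,|\hat\taut|_{\Ht^s(\hat T)}$ after summing over the appropriate range. Next I would handle the $\Lt^2$-term: pulling back via \eqref{eq:transhdivdiv} and using $h_T = |\Ft_T|_s$, shape-regularity (so that $|\Ft_T^{-1}|_s \eqc h_T^{-1}$ and $J_T \eqc h_T^d$), one obtains $\|\taut-\IntSigma\taut\|_{\Lt^2(T)} \leq c\, h_T^{s}\, \|\taut\|_{H^s(T)}$, where on the right I would either work with the full $H^s(T)$ norm as stated, or pass through the seminorm and absorb lower-order terms by the standard scaling inequalities. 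The face term $\sum_F h_F\|\tau_{h,nn}\|_{L^2(F)}^2$ is then controlled: on each face $F$, $(\taut-\IntSigma\taut)_{nn}$ has vanishing $P^k(F)$-moments by \eqref{eq:facecond}, so a trace inequality on $\hat T$ plus scaling gives $h_F^{1/2}\|(\taut-\IntSigma\taut)_{nn}\|_{L^2(F)} \leq c\,h_T^{s}\|\taut\|_{H^s(T)}$; here one uses $h_F \eqc h_T$ and the normal transformation $\nv_F = (J_T/J_F)\Ft_T^{-T}\hat\nv_{\hat F}$ to rewrite $\tau_{nn}$ on the physical face in terms of the reference quantity.

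The genuinely nonroutine step is the supremum (third) term $\displaystyle\sup_{w_h\in\spaceW_h}\bilB(\taut-\IntSigma\taut,\nabla w_h)/\|\nabla w_h\|_{\Lv^2}$, because $\bilB$ here is the \emph{broken} bilinear form \eqref{eq:evaluatediv_2}: $\bilB(\taut-\IntSigma\taut,\nabla w_h) = -\sum_T\int_T (\taut-\IntSigma\taut):\epst(\nabla w_h)\,d\xv + \sum_T\int_{\partial T}(\taut-\IntSigma\taut)_{nn}\,\partial w_h/\partial n\,ds$. The key observation is that on each element $\epst(\nabla w_h)|_T = \nabla^2 w_h|_T \in \Pt^{k-1}(T)$ (symmetric), and by the transformation $\epst(\nabla w_h) = \Ft_T^{-T}\hat\epst(\hat\nabla\hat w_h)\Ft_T^{-1}$ each such tensor, pulled back, lies in the span of $\{\hat p_i \hat\St^{\hat F_m}\}$ and $\{\hat p_i\hat\St^{\hat T,n}\}$ with $\hat p_i\in P^{k-1}(\hat T)$; hence by \eqref{eq:innercondI} (and the face conditions \eqref{eq:facecond} killing the boundary term, since $\partial w_h/\partial n|_F\in P^k(F)$) the volume-minus-boundary combination vanishes \emph{identically} for the lowest-order part — more precisely, $\bilB(\taut-\IntSigma\taut,\nabla w_h)$ can be written, after one integration by parts, purely in terms of $\opdiv\taut$-type quantities tested against $\nabla w_h$, and the interpolation conditions are exactly designed so that testing against $\Ft_T^{-T}\hat\St^{\hat F_m}\Ft_T^{-1}$-type functions gives zero. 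I would therefore show $\bilB(\taut-\IntSigma\taut,\nabla w_h) = \bilB((\mathrm{Id}-\Pi_h)\taut, \nabla w_h)$ can be replaced by subtracting from $\nabla w_h$ its appropriate projection so that only the higher-order, $O(h_T^s)$-small remainder of $\taut$ survives, then apply Cauchy--Schwarz elementwise together with the $\Lt^2$ and face estimates already obtained, plus an inverse inequality on $\spaceW_h$ to absorb any residual $h_F^{-1/2}$ from the boundary term against $\|\nabla w_h\|_{\Lv^2}$. Summing over elements and using $\sum_T h_T^{2s}\|\taut\|_{H^s(T)}^2$ yields the claimed bound. The main obstacle, then, is getting the moment conditions \eqref{eq:facecond}--\eqref{eq:innercondII} to align exactly with the structure of $\bilB(\cdot,\nabla w_h)$ on the reference element — i.e.\ verifying that the boundary term in the broken divergence pairs correctly with the face-normal-normal moments and that the interior term pairs with the span of the $\hat\St$ tensors — so that the supremum term of the error is, up to the $\Lt^2$- and face-contributions, of the same optimal order $h_T^s$.
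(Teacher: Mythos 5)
Your overall strategy coincides with the paper's: the $\Lt^2$ term and the weighted face term are handled exactly as you describe, by polynomial reproduction of $\IntSigma$, a Bramble--Hilbert argument on $\hat T$, and scaling through the transformation \eqref{eq:transhdivdiv} (the paper defers the details of this part to earlier work). For the supremum term you have correctly identified the decisive structural fact, but you stop one step short of the conclusion the paper draws, and the fallback machinery you propose is both unnecessary and dangerous. Since $w_h\in\spaceW_h$ is piecewise of degree $k+1$, one has $\epst(\nabla w_h)|_T\in\Pt^{k-1}(T)$, which (pulled back) lies \emph{entirely} in the span of $\hat p_i\hat\St^{\hat F_m}$, $\hat p_i\hat\St^{\hat T,n}$ with coefficients covered by \eqref{eq:innercondI}--\eqref{eq:innercondII}, and $\partial w_h/\partial n|_F\in P^k(F)$ is entirely covered by \eqref{eq:facecond}; hence
\begin{equation}
\bilB(\taut-\IntSigma\taut,\nabla w_h)=0\qquad\mbox{for every } w_h\in\spaceW_h,
\end{equation}
so the third term in \eqref{eq:normsigmah} vanishes identically --- not merely ``for the lowest-order part'' --- and the natural-norm estimate coincides with the $\Lt^2$/face estimate. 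There is no remainder to estimate, so no projection of $\nabla w_h$, no elementwise Cauchy--Schwarz, and no inverse inequality is needed. Moreover, the estimation route you sketch as a backup would not recover optimality on its own: bounding $\int_T(\taut-\IntSigma\taut):\epst(\nabla w_h)\,d\xv$ by Cauchy--Schwarz together with the inverse estimate $\|\epst(\nabla w_h)\|_{\Lt^2(T)}\leq c\,h_T^{-1}\|\nabla w_h\|_{\Lv^2(T)}$ costs a factor $h_T^{-1}$ and would only yield order $h^{s-1}$; it is precisely the exact cancellation built into the definition of $\IntSigma$ that avoids this loss, and you should commit to it rather than hedge.
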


\begin{proof}
In \cite{Sinwel:09}, it was shown that a very similar interpolation operator approximates in the $\Lt^2/L^2(F)$ norm. The same estimates holds for $\IntSigma$, which is expected, since the local space is the full polynomial space of order $k$ and $\IntSigma$ preserves piecewise polynomial finite element functions. The proof relies on a scaling argument and the Bramble-Hilbert lemma applied on the reference element, and is not provided in detail here,
\begin{equation}
\|\taut - \IntSigma \taut\|_{\Lt^2(\Omega)}^2 + \sum_{F \in \faces} h_F \|(\taut - \IntSigma \taut)_{nn}\|_{L^2(F)}^2   \leq c \ \sum_{T \in \triang} h_T^{2s} \|\taut\|_{H^{s}(T)}^2.
\end{equation}
To estimate the full norm $\|\taut - \IntSigma \taut\|_{\spaceSigma_h}$, we show that the supremum term vanishes,
\begin{equation}
\sup_{w_h \in \spaceW_h} \frac{\bilB(\taut - \IntSigma \taut, \nabla w_h)}{\|\nabla w_h\|_{\Lv^2(\Omega)}} \stackrel{!}{=} 0.
\end{equation}
We observe, due to the definition of the interpolation operator $\IntSigma$,
\begin{align}
&\bilB(\taut - \IntSigma \taut, \nabla w_h´) \\
&= \sum_{T \in \triang} \left( \int_T (\taut - \IntSigma \taut) : \underbrace{\epst(\nabla w_h)}_{\in P^{k-1}}\, d\xv - \int_{\partial T} (\taut - \IntSigma \taut)_{nn} \underbrace{\frac{\partial w_n}{\partial n}}_{\in P^k}\, ds \right)\\
& = 0
\end{align}
The stress interpolation operator $\IntSigma$ is defined in such a way that $\bilB (\taut - \IntSigma \taut, \nabla w_h)$ vanishes for any $w_h \in \spaceW_h$. Thus, the interpolation error estimate in the natural norm coincides with the estimate in $\Lt^2$ norm.
\end{proof}

\section{Analysis of the finite element problem}

A crucial tool for the analysis of the finite element problem is a discrete version of the regular decomposition from Theorem~\ref{theo:regulardec}. The following discrete decomposition can be deduced directly from the regular decomposition, see \cite{KolevVassilevski:09} for the case of $\Gamma = \Gamma_D$.

\begin{lemma} \label{lem:vas}
For a finite element vector field $\vv_h \in \spaceV_h$, there exists a decomposition
\begin{equation}
	\vv_h = \IntV \zv + \nabla p_h,
\end{equation}
with $\zv\in \Hv^1_{0,\GammaD}$, $\opcurl \zv = \opcurl \vv_h$ and $p_h \in W_h$. The respective parts can be bounded by
\begin{equation}
	\|p_h\|_{H^1} \leq c \|\vv_h\|_{\Lv^2} \qquad \mbox{and} \qquad 
	\|\zv\|_{\Hv^1} \leq c \|\vv_h\|_{\Hv(\opcurl)},
\end{equation}
with a generic constant $c$.
\end{lemma}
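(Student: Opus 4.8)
\emph{Proof proposal.} The plan is to transport the continuous regular decomposition through the N\'ed\'elec interpolation operator. First I would apply Theorem~\ref{theo:regulardec} to $\vv_h$ itself, obtaining $\vv_h = \zv + \nabla \phi$ with $\zv \in \Hv^1_{0,\GammaD}$, $\phi \in H^1_{0,\GammaD}$ and $\|\zv\|_{\Hv^1} \leq c\|\vv_h\|_{\Hv(\opcurl)}$. Two of the asserted properties are then immediate: since $\opcurl \nabla \phi = 0$ we already have $\opcurl \zv = \opcurl \vv_h$, and the bound on $\zv$ is exactly the one furnished by Theorem~\ref{theo:regulardec}. I would additionally record the sharper ``regular splitting'' bound $\|\nabla \phi\|_{\Lv^2} \leq c\|\vv_h\|_{\Lv^2}$, which the constructions in \cite{PasciakZhao:02,Hiptmair:02,HiptmairZheng:09} provide and which is the crucial extra ingredient for the $\Lv^2$-estimate on $p_h$ below.

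The second step is to produce $p_h \in \spaceW_h$. The field $\IntV \zv$ is well defined because $\zv \in \Hv^1$ and $\opcurl \zv = \opcurl \vv_h$ is piecewise polynomial, so the N\'ed\'elec moments make sense; moreover $\IntV \zv \in \spaceV_h$ has vanishing tangential trace on $\GammaD$ since $\zv \in \Hv^1_{0,\GammaD}$. By the commuting-diagram property of the N\'ed\'elec interpolant, $\opcurl \IntV \zv$ is the divergence-conforming interpolant of $\opcurl \zv = \opcurl \vv_h$, and the latter already lies in the divergence-conforming finite element space, so $\opcurl \IntV \zv = \opcurl \vv_h$ and hence $\opcurl(\vv_h - \IntV \zv) = 0$. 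Since $\vv_h - \IntV \zv \in \spaceV_h$ has zero tangential trace on $\GammaD$, the discrete de Rham exact sequence (which is also what makes $\nabla \spaceW_h$ the relevant kernel inside $\spaceV_h$) yields a unique $p_h \in \spaceW_h$ with $\nabla p_h = \vv_h - \IntV \zv$, that is $\vv_h = \IntV \zv + \nabla p_h$.

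It remains to bound $p_h$. By the Poincar\'e inequality on $H^1_{0,\GammaD}$ (here $\GammaD \neq \emptyset$ is used) it suffices to bound $\|\nabla p_h\|_{\Lv^2}$, and I would split $\nabla p_h = \vv_h - \IntV \zv = \nabla \phi + (\zv - \IntV \zv)$. The N\'ed\'elec interpolation error estimate gives $\|\zv - \IntV \zv\|_{\Lv^2} \leq c\,h\,(\|\zv\|_{\Hv^1} + \|\opcurl \zv\|_{\Lv^2}) \leq c\,h\,\|\vv_h\|_{\Hv(\opcurl)}$; since $\opcurl \vv_h$ is a finite element function, a local inverse inequality yields $h\,\|\opcurl \vv_h\|_{\Lv^2} \leq c\,\|\vv_h\|_{\Lv^2}$ and therefore $\|\zv - \IntV \zv\|_{\Lv^2} \leq c\,\|\vv_h\|_{\Lv^2}$. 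Combined with $\|\nabla \phi\|_{\Lv^2} \leq c\,\|\vv_h\|_{\Lv^2}$ from the first step, this gives $\|\nabla p_h\|_{\Lv^2} \leq c\,\|\vv_h\|_{\Lv^2}$, which is the claimed estimate.

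The main obstacle is exactly this $\Lv^2$ bound on $p_h$: the form of the regular decomposition written in Theorem~\ref{theo:regulardec} only controls $\|\phi\|_{H^1}$ by $\|\vv_h\|_{\Hv(\opcurl)}$, which would give only the weaker $\|p_h\|_{H^1} \leq c\|\vv_h\|_{\Hv(\opcurl)}$; one must invoke the sharper splitting bound $\|\nabla \phi\|_{\Lv^2} \leq c\|\vv_h\|_{\Lv^2}$ from the underlying references, and also an inverse inequality, so the constant depends only on the shape-regularity of $\triang$. The remaining technical points — admissibility of $\IntV\zv$ and exactness of the discrete sequence under the geometric assumptions of Section~\ref{sec:domain} — are standard, and for $\GammaD = \partial\Omega$ the whole argument is carried out in \cite{KolevVassilevski:09}.
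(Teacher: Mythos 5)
The paper does not write out a proof of Lemma~\ref{lem:vas} at all — it only remarks that the decomposition ``can be deduced directly from the regular decomposition'' and cites Kolev--Vassilevski for $\Gamma=\Gamma_D$ — and your argument is precisely that deduction: apply Theorem~\ref{theo:regulardec} to $\vv_h$, interpolate the regular part (well-defined since $\opcurl\zv=\opcurl\vv_h$ is piecewise polynomial), use the commuting diagram and discrete exactness to extract $p_h$, and combine the N\'ed\'elec interpolation error with a local inverse inequality. Your explicit observation that the claimed bound $\|p_h\|_{H^1}\leq c\|\vv_h\|_{\Lv^2}$ needs the sharper $\Lv^2$-stability $\|\nabla\phi\|_{\Lv^2}\leq c\|\vv_h\|_{\Lv^2}$ of the continuous splitting, which is provided by the cited constructions but not by the literal statement of Theorem~\ref{theo:regulardec}, identifies exactly what the paper delegates to the reference, so the proposal is correct and follows the intended route.
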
 

\subsection{Continuity of the finite element problem}

We are concerned with continuity of the  bilinear forms with respect to the discrete norm $\|\cdot\|_{\spaceSigma_h}$ and the $\Hv(\opcurl)$ norm $\|\cdot\|_{\Hv(\opcurl)}$.
Obviously, $\bilA(\cdot,\cdot)$ is continuous, as it is continuous in $\Lt^2$.
For $\bilB(\cdot,\cdot)$, showing continuity is more challenging. 

\begin{lemma} \label{lemma:bcont}
The bilinear form $\bilB(\cdot,\cdot)$ defined in \eqref{eq:evaluatediv_1}, \eqref{eq:evaluatediv_2} is continuous on $\spaceSigma_h \times \spaceV_h$ with respect to the norms $\|\cdot\|_{\Hv(\opcurl)}$ and $\|\cdot\|_{\spaceSigma_h}$. For $\taut_h \in \spaceSigma_h$ and $\vv_h \in \spaceV_h$ there exists a constant $c$ independent of mesh size $h$
\begin{equation}
	\bilB(\taut_h, \vv_h) \leq c \|\taut_h\|_{\spaceSigma_h} \|\vv_h\|_{\Hv(\opcurl)}. \label{eq:bcont}
\end{equation}
Estimate \eqref{eq:bcont} can be generalized to any piecewise smooth, normal-normal continuous tensor field $\taut$, $\taut|_T \in \Ht^1_{sym}(T)$ for all $T\in \triang$, $\tau_{nn}|_F \in L^2(F)$.
\end{lemma}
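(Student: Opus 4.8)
The plan is to split the test field $\vv_h$ with the discrete regular decomposition of Lemma~\ref{lem:vas}, which is exactly designed to separate the ``gradient part'' of $\vv_h$ --- on which the elementwise strains and normal jumps cannot be controlled by $\|\vv_h\|_{\Hv(\opcurl)}$ --- from a genuinely $\Hv^1$-regular remainder. So I would write $\vv_h = \IntV\zv + \nabla p_h$ with $\zv\in\Hv^1_{0,\GammaD}$ and $p_h\in\spaceW_h$, satisfying $\|p_h\|_{H^1}\leq c\|\vv_h\|_{\Lv^2}$ and $\|\zv\|_{\Hv^1}\leq c\|\vv_h\|_{\Hv(\opcurl)}$, and treat $\bilB(\taut_h,\nabla p_h)$ and $\bilB(\taut_h,\IntV\zv)$ separately.

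The gradient part is immediate from the definition \eqref{eq:normsigmah}: since $\nabla p_h\in\nabla\spaceW_h$,
\[
\bilB(\taut_h,\nabla p_h)\leq\Big(\sup_{w_h\in\spaceW_h}\frac{\bilB(\taut_h,\nabla w_h)}{\|\nabla w_h\|_{\Lv^2}}\Big)\,\|\nabla p_h\|_{\Lv^2}\leq\|\taut_h\|_{\spaceSigma_h}\,\|p_h\|_{H^1}\leq c\,\|\taut_h\|_{\spaceSigma_h}\,\|\vv_h\|_{\Hv(\opcurl)}.
\]
For the remainder I would evaluate $\bilB(\taut_h,\IntV\zv)$ through the elementwise identity \eqref{eq:evaluatediv_2}. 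Reassembling the face integrals, every interior face $F$ contributes $\int_F\tau_{h,nn}[(\IntV\zv)_n]\,ds$ because $\tau_{h,nn}$ is single-valued across $F$; the faces on $\GammaN$ vanish since $\tau_{h,nn}=0$ there; and the faces on $\GammaD$ are rewritten using $\zv|_{\GammaD}=0$, so that $(\IntV\zv)_n=(\IntV\zv-\zv)_n$ there. A Cauchy--Schwarz step on the volume term and on the face terms, pairing $\big(\sum_{F}h_F\|\tau_{h,nn}\|_{L^2(F)}^2\big)^{1/2}$ with a weighted sum of $\|[(\IntV\zv)_n]\|_{L^2(F)}$, then gives $\bilB(\taut_h,\IntV\zv)\leq c\,\|\taut_h\|_{\spaceSigma_h}\,\|\IntV\zv\|_{\Hv^1,h}$ with the broken norm \eqref{eq:normh1h}.

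It then remains to bound $\|\IntV\zv\|_{\Hv^1,h}\leq c\|\zv\|_{\Hv^1}$; since $\zv\in\Hv^1_{0,\GammaD}$ has no normal jumps (interior or on $\GammaD$), this is a broken-$H^1$ stability estimate for the N\'ed\'elec interpolant, obtained from the interpolation estimates for $\IntV$ (cf.\ \cite{Sinwel:09}) together with inverse and trace inequalities, provided the regular part delivered by Lemma~\ref{lem:vas} is regular enough for $\IntV$ to act on it. Combining the three estimates with Lemma~\ref{lem:vas} yields \eqref{eq:bcont}. For the generalization one simply checks that nothing above uses polynomiality of $\taut_h$: only $\taut\in\Lt^2$, single-valuedness of $\tau_{nn}$ on interior faces, $\tau_{nn}=0$ on $\GammaN$, and $\tau_{nn}|_F\in L^2(F)$ enter, so the same chain of inequalities proves the estimate for any piecewise smooth, normal-normal continuous $\taut$ with $\taut|_T\in\Ht^1_{sym}(T)$.

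I expect the genuine obstacle to be the interaction between the face bookkeeping in the second step and the broken-$H^1$ control of $\IntV\zv$: one must verify that the gradient part is the \emph{only} part of $\vv_h$ that escapes an $\Hv(\opcurl)$-bound, that the $\GammaD$ boundary-face contributions really do fold into $\|\IntV\zv\|_{\Hv^1,h}$ via the homogeneous tangential boundary condition on $\zv$, and that $\IntV$ is stable in the broken $H^1$ norm on the remainder --- the last point being delicate because the nodal N\'ed\'elec interpolant is not continuous on all of $\Hv^1$ in three dimensions, so one has to invoke the (piecewise) extra regularity built into the discrete regular decomposition. The term $\bilB(\taut_h,\nabla p_h)$, by contrast, is essentially free once that decomposition is in hand.
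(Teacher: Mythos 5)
Your proposal follows essentially the same route as the paper: the discrete regular decomposition of Lemma~\ref{lem:vas}, the gradient part absorbed directly by the supremum term in \eqref{eq:normsigmah}, and the regular part bounded through the $\Lt^2$/broken-$\Hv^1$ continuity of $\bilB(\cdot,\cdot)$ combined with $\|\IntV\zv\|_{\Hv^1,h}\leq c\|\zv\|_{\Hv^1}$. The one step you leave as a claim --- the broken-$H^1$ control of $\IntV\zv$, which you correctly single out as the delicate point --- is carried out in the paper by inserting the Cl\'ement quasi-interpolant $\Clement\zv$, applying an inverse inequality to the discrete difference $\IntV\zv-\Clement\zv$, and using the $\Lv^2$ interpolation estimates for $\IntV$ and $\Clement$ (legitimate here because $\opcurl\zv=\opcurl\vv_h$ in Lemma~\ref{lem:vas} makes $\IntV\zv$ well defined), which is precisely the mechanism your ``interpolation estimates plus inverse inequalities'' remark points toward.
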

\begin{proof}
Let $\taut$, $\taut|_T \in \Ht^1_{sym}(T)$ for all $T\in \triang$, $\tau_{nn}|_F \in L^2(F)$ be a normal-normal continuous piecewise smooth tensor field. Note that this includes all finite element tensor fields $\taut_h$.
For $\vv_h \in \spaceV_h$, let $\vv_h = \IntV \zv + \nabla p_h$ be the decomposition from Lemma~\ref{lem:vas}. We have
\begin{equation}
	\bilB(\taut, \vv_h) = \bilB(\taut, \IntV \zv) + \bilB(\taut, \nabla p_h).
\end{equation}
We estimate the two parts separately. For the estimate concerning $\zv$, we first use that $\bilB(\cdot,\cdot)$ is continuous in the $\Lt^2$/broken $\Hv^1$ setting.
\begin{eqnarray}
	\bilB(\taut, \IntV \zv) &\leq& \left( \|\taut\|_{\Lt^2(\Omega)} + \left(\sum_{F \in \faces} h_F \|\tau_{nn}\|_{L^2(F)}^2\right)^{1/2} \right) \|\IntV \zv \|_{\Hv^1,h} \\
	&\leq& \|\taut\|_{\spaceSigma_h} \|\IntV \zv \|_{\Hv^1,h}
\end{eqnarray} 
Next, we utilize the Cl\'ement interpolation operator $\Clement$, which is continuous in $H^1$,
\begin{eqnarray}
	\bilB(\taut, \IntV \zv) &\leq&\|\taut\|_{\spaceSigma_h} \left(\|\IntV\zv - \Clement \zv \|_{\Hv^1,h} + \|\Clement \zv\|_{\Hv^1(\Omega)} \right)\\
	&\leq& c \|\taut\|_{\spaceSigma_h} \left( \|\IntV\zv - \Clement \zv \|_{\Hv^1,h} + \|\zv\|_{\Hv^1} \right).
\end{eqnarray}
By an inverse inequality for the finite element function $\IntV\zv - \Clement \zv $ we see
\begin{equation}
	\bilB(\taut, \IntV \zv)  \leq c \|\taut\|_{\spaceSigma_h} \left( h^{-1}\|\IntV\zv - \Clement \zv \|_{\Lv^2} + \|\zv\|_{\Hv^1} \right).
\end{equation}
Using interpolation error estimates for $\IntV$ and $\Clement$,
\begin{align}
	\|\IntV\zv - \Clement \zv \|_{\Lv^2} \leq \|\IntV \zv - \zv\|_{\Lv^2} + \|\Clement \zv - \zv\|_{\Lv^2} \leq ch \|\zv\|_{\Hv^1},	
\end{align}	
we arrive at
\begin{eqnarray}
	\bilB(\taut, \IntV \zv)  &\leq& c \|\taut\|_{\spaceSigma_h} \|\zv\|_{\Hv^1}\\
	&\leq& c \|\taut\|_{\spaceSigma_h}\|\vv_h\|_{\Hv(\opcurl)}. \label{eq:estimate1}
\end{eqnarray}

The estimate concerning $\nabla p_h$ follows directly,
\begin{eqnarray}
	\bilB(\taut, \nabla p_h) &\leq& \sup_{w_h \in \spaceW_h} \frac{\bilB(\taut, \nabla w_h)}{\|\nabla w_h\|_{\Lv^2}} \, \|\nabla p_h\|_{\Lv^2} \\
	& \leq & c \|\taut\|_{\spaceSigma_h}\, \|\vv_h\|_{\Lv^2(\Omega)}. \label{eq:estimate2}
\end{eqnarray}
Together, \eqref{eq:estimate1} and \eqref{eq:estimate2} lead to the desired continuity result.
\end{proof}

\subsection{Stability of the finite element problem}

According to \cite{BoffiBrezziFortin:13}, we need to provide stability of the bilinear forms with respect to the discrete norms, i.e. we need to show discrete kernel-coercivity of $\bilA(\cdot,\cdot)$ and an inf-sup condition for $\bilB(\cdot,\cdot).$

\begin{lemma}
The bilinear form $\bilA(\cdot,\cdot)$ is coercive on the discrete kernel $\opker(B_h) := \{\taut_h \in  \spaceSigma_h, \bilB(\taut_h, \vv_h) = 0 \ \forall \vv_h \in \spaceV_h\}$,
\begin{equation}
	\bilA(\taut_h,\taut_h) \geq c \|\taut_h\|^2_{\spaceSigma_h} \qquad \forall
	\taut_h \in  \opker(B_h).
\end{equation}
with generic constant $c$ independent of the mesh size.
\end{lemma}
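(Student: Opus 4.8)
The plan is to deduce discrete kernel-coercivity from the $\Lt^2$-ellipticity of $\Atel$, exactly in the spirit of the infinite-dimensional Lemma~\ref{lemma:a}, by showing that on the discrete kernel the dual-norm (supremum) term in the definition \eqref{eq:normsigmah} of $\|\cdot\|_{\spaceSigma_h}$ vanishes, while the face terms are dominated by the $\Lt^2$-term. As a first step I would record that, since $\nu$ is bounded away from $1/2$ and $E$ is bounded, $\Atel \in \Lt^\infty(\Omega)$ is uniformly positive definite, so
\[
 \bilA(\taut_h,\taut_h) = \int_\Omega \Atel \taut_h : \taut_h\, d\xv \geq c_a \|\taut_h\|_{\Lt^2}^2 \qquad \forall\, \taut_h \in \spaceSigma_h,
\]
with $c_a>0$ depending only on the bounds on $\Atel$.

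Second, I would observe that the supremum term in \eqref{eq:normsigmah} vanishes for $\taut_h \in \opker(B_h)$. By our choice of finite element spaces we have $\nabla \spaceW_h \subset \spaceV_h$; hence every $\nabla w_h$ with $w_h \in \spaceW_h$ is an admissible test function in the definition of the discrete kernel, and therefore $\bilB(\taut_h, \nabla w_h)=0$ for all $w_h \in \spaceW_h$. Consequently $\sup_{w_h \in \spaceW_h} \bilB(\taut_h,\nabla w_h)/\|\nabla w_h\|_{\Lv^2} = 0$, mirroring the continuous argument.

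Third, I would control the face terms by the volume term. This is the only point where finite-dimensionality enters: on each element $T$ the trace $\tau_{h,nn}|_F$ on a face $F \subset \partial T$ is the trace of a polynomial of fixed degree $k$, so a scaled trace inequality on the reference simplex, transported to $T$ via $\Phi_T$ using shape regularity of $\triang$, gives $h_F \|\tau_{h,nn}\|_{L^2(F)}^2 \leq C \|\taut_h\|_{\Lt^2(T)}^2$ with $C$ independent of $h$ — this is precisely the estimate recalled after \eqref{eq:normsigmah}, cf.\ \cite{PechsteinSchoeberl:11}. Summing over all faces and elements, $\sum_{F \in \faces} h_F \|\tau_{h,nn}\|_{L^2(F)}^2 \leq C \|\taut_h\|_{\Lt^2}^2$. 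Combining the three steps, for $\taut_h \in \opker(B_h)$,
\[
 \|\taut_h\|_{\spaceSigma_h}^2 = \|\taut_h\|_{\Lt^2}^2 + \sum_{F \in \faces} h_F \|\tau_{h,nn}\|_{L^2(F)}^2 \leq (1+C)\|\taut_h\|_{\Lt^2}^2 \leq \frac{1+C}{c_a}\, \bilA(\taut_h,\taut_h),
\]
which is the asserted coercivity with $c = c_a/(1+C)$.

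I do not expect a genuine obstacle here: the argument merely combines the $\Lt^2$-coercivity of $\Atel$, the inclusion $\nabla \spaceW_h \subset \spaceV_h$ (which annihilates the dual-norm term, exactly as in the continuous Lemma~\ref{lemma:a}), and a standard scaled trace inequality on shape-regular simplices. The only point requiring a little care is to verify that the constant in the trace inequality is independent of the mesh size, which follows from the reference-element estimate together with the shape regularity of $\triang$.
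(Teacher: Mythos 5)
Your proof is correct and follows essentially the same route as the paper: drop the face terms via the finite-element trace estimate (constant independent of $h$ by scaling and shape regularity), observe that the supremum term vanishes on $\opker(B_h)$ because $\nabla \spaceW_h \subset \spaceV_h$, and conclude with the $\Lt^2$-coercivity of $\Atel$. The only difference is that you spell out the scaled trace inequality that the paper simply cites from \cite{PechsteinSchoeberl:11}.
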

\begin{proof}
Let $\taut_h \in \opker(B_h)$ be fixed. Since $\taut_h$ is a finite element function, the facet term in the discrete norm \eqref{eq:normsigmah} can be omitted,
\begin{equation}
	\|\taut_h\|_{\spaceSigma_h}^2 \leq c \left( \|\taut_h\|_{\Lt^2(\Omega)}^2 + \left(\sup_{w_h \in \spaceW_h} \frac{\bilB(\taut_h, \nabla w_h)}{\|\nabla w_h\|_{\Lv^2(\Omega)}} \right)^2 \right).
	\label{eq:norm}
\end{equation}
By definition of $\opker(B_h)$, we have
\begin{equation}
	\bilB(\taut_h, \vv_h) =  0.
\end{equation}
Since $\nabla \spaceW_h \subset \spaceV_h$, the supremum term in \eqref{eq:norm} vanishes,
\begin{equation}
	\|\taut_h\|_{\spaceSigma_h}^2 \leq c  \|\taut_h\|_{\Lt^2(\Omega)}^2 .
\end{equation}
Hence, coercivity is implied by coercivity of the compliance tensor,
\begin{equation}
	\bilA(\taut_h, \taut_h) \geq \lambda_{\min}(\Atel) \|\taut_h\|_{\Lt^2(\Omega)}^2 \geq c \lambda_{\min}(\Atel) \|\taut_h\|_{\spaceSigma_h}^2.
\end{equation}
\end{proof}

\begin{lemma}
The bilinear form $\bilB(\cdot,\cdot)$ is inf-sup stable on $\spaceSigma_h \times \spaceV_h$, for $\vv_h \in \spaceV_h$ there exists some $\sigmat_h \in \spaceSigma_h$ such that
\begin{equation}
	\bilB(\sigmat_h, \uv_h) \geq c \|\sigmat_h\|_{\spaceSigma_h}\|\uv_h\|_{\Hv(\opcurl)}.
\end{equation}
\end{lemma}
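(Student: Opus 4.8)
The plan is to mimic, at the discrete level, the proof of the continuous inf-sup condition in Lemma~\ref{lem:binfsup}, replacing the $H^2\cap H^1_{0,\Gamma_D}$ variational problem by a suitable conforming discrete elasticity problem and using the discrete regular decomposition from Lemma~\ref{lem:vas} together with the commuting property of the interpolation operators. Concretely, given $\vv_h \in \spaceV_h$, I would first apply Lemma~\ref{lem:vas} to write $\vv_h = \IntV\zv + \nabla p_h$ with $\zv \in \Hv^1_{0,\GammaD}$, $\opcurl\zv = \opcurl\vv_h$, and $p_h \in \spaceW_h$, controlled by $\|\zv\|_{\Hv^1} \leq c\|\vv_h\|_{\Hv(\opcurl)}$ and $\|p_h\|_{H^1} \leq c\|\vv_h\|_{\Lv^2}$. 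The stress $\sigmat_h$ will be built from two ingredients handling these two components separately, then combined.

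For the gradient part $\nabla p_h$, the idea is to use the $\spaceW_h$-problem: find $w_h \in \spaceW_h$ with $\int_\Omega \nabla w_h \cdot \nabla q_h\,d\xv = \int_\Omega \nabla p_h \cdot \nabla q_h\,d\xv$ for all $q_h \in \spaceW_h$ (so $w_h = p_h$ up to additive constants, which makes this part essentially trivial), and more importantly to produce a stress whose $\opdiv\opdivv$ captures $\nabla p_h$ through $\bilB(\cdot, \nabla w_h)$. Following Lemma~\ref{lem:binfsup}, I would solve a discrete primal elasticity-type problem: find $\uv_h \in \spaceV_h \cap (H^1\text{-like})$ — but since $\spaceV_h$ is only $\Hv(\opcurl)$-conforming, the cleaner route is to work with a piecewise-$H^1$ formulation, solving for $\uv_h \in \spaceV_h$ the problem $\sum_T \int_T \epst(\uv_h):\epst(\tilde\vv_h)\,d\xv + (\text{stabilization/jump terms}) + \int_{\Gamma_D} u_{h,n}\tilde v_{h,n} = \int_\Omega \opcurl\vv_h\cdot\opcurl\tilde\vv_h + \vv_h\cdot\tilde\vv_h$, and then setting $\sigmat_h := \IntSigma(\Ctel\epst(\uv_h))$. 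The key point is that $\IntSigma$ is designed so that $\bilB(\taut - \IntSigma\taut, \nabla w_h) = 0$ for all $w_h \in \spaceW_h$, so the supremum term in $\|\sigmat_h\|_{\spaceSigma_h}$ is controlled by the same expression as for $\Ctel\epst(\uv_h)$, and the test functions $\nabla w_h$ reproduce $\|\opcurl\vv_h\|^2 + \|\vv_h\|^2$ exactly as in the continuous proof because $\opcurl\nabla w_h = 0$. The $\Lt^2$ bound $\|\sigmat_h\|_{\Lt^2} \leq c\|\vv_h\|_{\Hv(\opcurl)}$ follows from the interpolation stability of $\IntSigma$ and the Lax-Milgram-type estimate for $\uv_h$.

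Evaluating $\bilB(\sigmat_h, \vv_h)$ then requires assembling the pieces: using \eqref{eq:evaluatediv_2} and the interpolation conditions \eqref{eq:facecond}--\eqref{eq:innercondII}, one shows $\bilB(\sigmat_h, \vv_h)$ reproduces $\|\opcurl\vv_h\|_{\Lv^2}^2 + \|\vv_h\|_{\Lv^2}^2 = \|\vv_h\|_{\Hv(\opcurl)}^2$ up to constants, exactly parallel to the final computation in Lemma~\ref{lem:binfsup}. Combined with $\|\sigmat_h\|_{\spaceSigma_h} \leq c\|\vv_h\|_{\Hv(\opcurl)}$, this yields $\bilB(\sigmat_h, \vv_h) \geq c\|\sigmat_h\|_{\spaceSigma_h}\|\vv_h\|_{\Hv(\opcurl)}$.

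The main obstacle I anticipate is the discrete elasticity problem for $\uv_h$: unlike the continuous case where one works in $\Hv^1 \cap \Hv_{0,\Gamma_D}(\opcurl)$ with a genuine coercive bilinear form, here $\spaceV_h$ is nonconforming with respect to $\Hv^1$, so one must either introduce interior-penalty-type jump stabilization (and verify it does not spoil the $\bilB$-evaluation because $\sigmat_h$ is normal-normal continuous while the jumps involve normal components) or instead solve the auxiliary problem in the conforming subspace and transfer via $\IntV$ using the commuting diagram and Lemma~\ref{lem:vas}. Making the boundary term $\int_{\Gamma_D} u_{h,n}\tilde v_{h,n}$ rigorous in the discrete setting (it should encode $\sigma_{h,nn} = 0$ on $\Gamma_N$, matching the space $\spaceSigma_h$) and checking that $\IntSigma$ commutes suitably with the essential boundary condition $\tau_{h,nn}|_{\Gamma_N} = 0$ is the delicate technical core; everything else is a transcription of the continuous argument with $H^2\cap H^1_{0,\Gamma_D}$ replaced by $\spaceW_h$.
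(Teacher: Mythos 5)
Your plan does not reach a complete proof: the step you yourself flag as the ``delicate technical core'' is exactly where the argument breaks, and it is not a technicality. First, the auxiliary discrete primal elasticity problem is posed on $\spaceV_h$, which is only $\Hv(\opcurl)$-conforming, so $\epst(\uv_h)$ exists only elementwise; to get a well-posed, $h$-uniformly stable problem you would need an interior-penalty formulation with a discrete Korn inequality, none of which is set up. Second, even granting such a $\uv_h$, the field $\Ctel\epst(\uv_h)$ is in general \emph{not} normal-normal continuous across faces (tangential continuity of $\uv_h$ does not give continuity of $(\Ctel\epst(\uv_h))_{nn}$), so it is not in the domain of $\IntSigma$ as defined by \eqref{eq:facecond}--\eqref{eq:innercondII}; the face condition \eqref{eq:facecond} is ambiguous for a two-valued $\tau_{nn}$. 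Third, and most importantly, the final lower bound is only asserted: the commuting property of $\IntSigma$ established in the paper is $\bilB(\taut-\IntSigma\taut,\nabla w_h)=0$ for $w_h\in\spaceW_h$ \emph{only}, so after replacing $\Ctel\epst(\uv_h)$ by its interpolant you control $\bilB(\sigmat_h,\nabla p_h)$ but have no identity for $\bilB(\sigmat_h,\IntV\zv)$, i.e.\ for the non-gradient part of the decomposition from Lemma~\ref{lem:vas}. The claim that $\bilB(\sigmat_h,\vv_h)$ ``reproduces'' $\|\vv_h\|_{\Hv(\opcurl)}^2$ therefore has no justification in your construction.

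The paper avoids all of this by a different choice of $\sigmat_h$: it takes $(\sigmat_h,\uv_h)\in\spaceSigma_h\times\spaceV_h$ to be the solution of the discrete mixed TDNNS system itself with right-hand side $\int_\Omega(\vv_h\cdot\tilde\vv_h+\opcurl\vv_h\cdot\opcurl\tilde\vv_h)\,d\xv$ in the second equation, invoking the stability of that system in the $\Lt^2$ / broken $\Hv^1$ norms already proved in \cite{PechsteinSchoeberl:11}. Then $\sigmat_h$ lies in $\spaceSigma_h$ by construction (no interpolation, no conformity issue), the $\Lt^2$ bound follows from that stability together with $\|\tilde\vv_h\|_{\Hv^1,h}\geq c\|\tilde\vv_h\|_{\Hv(\opcurl)}$, the supremum term in $\|\cdot\|_{\spaceSigma_h}$ is bounded by testing the second equation with $\nabla w_h$ (using $\opcurl\nabla w_h=0$), and the identity $\bilB(\sigmat_h,\vv_h)=\|\vv_h\|_{\Hv(\opcurl)}^2$ holds exactly by testing with $\tilde\vv_h=\vv_h$. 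If you want to salvage your route, you would have to prove a discrete Korn-stable IP elasticity solve, extend $\IntSigma$ (e.g.\ via face averages) to discontinuous data, and prove a commuting property of $\IntSigma$ against all of $\spaceV_h$, not just $\nabla\spaceW_h$ --- a substantially longer path than the paper's.
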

\begin{proof}
The proof is very similar to the one of Lemma~\ref{lem:binfsup} in the infinite dimensional setting. Nevertheless, we provide it in detail here.

Let $\vv_h \in \spaceV_h$ be fixed. According to the finite element theory using $\Lt^2$ and broken $\Hv^1$ norms, there exists a unique pair $(\taut_h, \uv_h) \in \spaceSigma_h \times \spaceV_h$ satisfying
\begin{align}
	&\bilA(\sigmat_h, \taut_h) + \bilB(\taut_h, \uv_h) = 0 &&\forall \taut_h \in \spaceSigma_h,\\
	&\bilB(\sigmat_h, \tilde\vv_h) = \int_\Omega \left( \vv_h \cdot \tilde\vv_h + \opcurl \vv_h \cdot \opcurl \tilde\vv_h\right) \,d\xv && \forall \tilde\vv_h \in \spaceV_h. \label{eq:defsigma2}
\end{align}
Moreover, we have the stability estimate
\begin{equation}
	\|\sigmat_h\|_{\Lt^2(\Omega)} \leq c\sup_{\tilde\vv_h \in \spaceV_h} \frac{\int_\Omega \left( \vv_h \cdot \tilde\vv_h + \opcurl \vv_h \cdot \opcurl \tilde\vv_h\right) \,d\xv}{\|\tilde\vv_h\|_{\Hv^1,h}}.
\end{equation}
Since $\|\tilde\vv_h\|_{\Hv^1,h} \geq c \|\tilde\vv_h\|_{\Hv(\opcurl)}$, we deduce
\begin{equation}
	\|\sigmat_h\|_{\Lt^2(\Omega)} \leq c\|\vv_h\|_{\Hv(\opcurl)}. \label{eq:1}
\end{equation}
Testing the second equation \eqref{eq:defsigma2} with a gradient function $\nabla w_h$, we see using that $\opcurl \nabla w_h = 0$,
\begin{equation}
\bilB(\sigmat_h, \nabla w_h) = \int_\Omega \vv_h \cdot \nabla w_h\,d\xv.
\end{equation}
Hence, we deduce
\begin{equation}
\sup_{w_h \in \spaceW_h} \frac{\bilB(\sigmat_h,\nabla w_h)}{\|\nabla w_h\|_{\Lv^2}} = 
\sup_{w_h \in \spaceW_h} \frac{\int_\Omega \vv_h \cdot \nabla w_h \,d\xv}{\|\nabla w_h\|_{\Lv^2}}
\leq \|\vv_h\|_{\Lv^2}. \label{eq:2}
\end{equation}
Adding up squared \eqref{eq:1} and \eqref{eq:2}, we can bound $\|\sigmat_h\|_{\spaceSigma_h}$ by $ \|\vv_h\|_{\Hv(\opcurl)}$,
\begin{equation}
	\|\sigmat_h\|_{\spaceSigma_h} \leq c \|\vv_h\|_{\Hv(\opcurl)}.
\end{equation}
Finally using $\tilde\vv_h = \vv_h$ as a test function in \eqref{eq:defsigma2}, we arrive at the desired result
\begin{equation}
	\bilB(\sigmat_h,\vv_h) = \|\vv_h\|_{\Hv(\opcurl)}^2 \geq c \|\sigmat_h\|_{\spaceSigma_h} \|\vv_h\|_{\Hv(\opcurl)}.
\end{equation}

\end{proof}

\subsection{Error estimates}

Since the finite element method is slightly nonconforming, $\spaceSigma_h \not \subset \spaceSigma$, the error cannot be bounded directly using the theory from \cite{BoffiBrezziFortin:13}. Instead, we rely on techniques from Strang's second lemma, where consistency and interpolation error bound the approximation error. 

\begin{theorem}
Let $(\sigmat,\uv) \in \spaceSigma  \times \spaceV$ be the solution to the elasticity problem \eqref{eq:spp1}, \eqref{eq:spp2}. Let $(\sigmat_h, \uv_h) \in \spaceSigma_h \times \spaceV_h$ be the finite element solution from \eqref{eq:spph1}, \eqref{eq:spph2}. Suppose $\sigmat, \uv$ be sufficiently smooth, then we have the error bound for $1 \leq s \leq k$
\begin{eqnarray}
	\lefteqn{\|\sigmat - \sigmat_h\|_{\spaceSigma_h} + \|\uv - \uv_h\|_{\Hv(\opcurl)}} \\
	&\leq& c \left( \sum_{T \in \triang} h_T^{2s} \left(
		\|\uv\|_{\Hv^{s+1}(T)}^2 + \|\sigmat\|_{\Ht^s(T)}^2 \right) \right)^{1/2}.
\end{eqnarray}
\end{theorem}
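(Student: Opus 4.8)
The plan is to use Strang's second lemma for the mixed problem, exploiting that the discrete bilinear forms $\bilA$ and $\bilB$ are stable (kernel-coercivity and inf--sup, proved above) and continuous with respect to $\|\cdot\|_{\spaceSigma_h}$ and $\|\cdot\|_{\Hv(\opcurl)}$. The standard abstract argument for stable mixed problems gives, for the interpolants $\IntSigma\sigmat$ and $\IntV\uv$,
\begin{equation*}
\|\sigmat_h - \IntSigma\sigmat\|_{\spaceSigma_h} + \|\uv_h - \IntV\uv\|_{\Hv(\opcurl)}
\leq c\Bigl( \|\sigmat - \IntSigma\sigmat\|_{\spaceSigma_h} + \|\uv - \IntV\uv\|_{\Hv(\opcurl)} + E_{\mathrm{cons}} \Bigr),
\end{equation*}
where $E_{\mathrm{cons}}$ is the consistency error measuring how badly the exact solution fails to satisfy the discrete equations. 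The triangle inequality then combines this with the interpolation estimates from Section~5. So the proof decomposes into three tasks: (i) set up the abstract stability estimate, (ii) bound the interpolation errors, (iii) bound the consistency error.

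For (i) I would invoke the by-now-standard result (see \cite{BoffiBrezziFortin:13}) that discrete kernel-coercivity of $\bilA$ together with the discrete inf--sup condition for $\bilB$, plus continuity of both forms, yields a uniform inf--sup constant for the full bilinear form of the saddle-point system on $\spaceSigma_h \times \spaceV_h$. All four ingredients are established in the preceding lemmas of Section~7. Testing the error equation for $(\sigmat_h - \IntSigma\sigmat, \uv_h - \IntV\uv)$ against the supremizing pair, and using that the exact solution satisfies the continuous equations, produces the displayed estimate with $E_{\mathrm{cons}}$ the sum of the residuals of \eqref{eq:spp1}--\eqref{eq:spp2} tested against discrete functions. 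For (ii) I would simply cite the interpolation theorems just proved: $\|\uv - \IntV\uv\|_{\Hv(\opcurl)} \leq c (\sum_T h_T^{2s}(\|\uv\|_{\Hv^s(T)}^2 + \|\opcurl\uv\|_{\Hv^s(T)}^2))^{1/2}$, which is controlled by $\|\uv\|_{\Hv^{s+1}(T)}$, and $\|\sigmat - \IntSigma\sigmat\|_{\spaceSigma_h} \leq c(\sum_T h_T^{2s}\|\sigmat\|_{\Ht^s(T)}^2)^{1/2}$.

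The main obstacle is (iii), the consistency error, because this is precisely where nonconformity bites: $\sigmat_h$ and $\taut_h$ are only normal-normal continuous, so their normal-tangential components jump across element faces, and the discrete $\bilB$ defined in \eqref{eq:evaluatediv_1}--\eqref{eq:evaluatediv_2} contains edge/face terms that would not appear for a conforming tensor field. Concretely, $\bilB(\taut_h,\uv) - (\text{continuous form})$ involves sums over faces of $\tauv_{\nv\tang}$ paired against jumps of $\uv_\tang$ — but since $\uv$ is the smooth exact solution, $\uv_\tang$ is continuous so these jump terms vanish; symmetrically, the residual of \eqref{eq:spp1} tested against $\taut_h$ involves $\int_{\partial T}\tau_{h,nn}\,(\cdot)$ terms where the continuity of the exact $\sigma_{nn}$ (and of $u_n$) saves the day. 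I would argue that, because the exact solution has the full regularity and satisfies the strong-form equations \eqref{eq:Hooke}--\eqref{eq:equilibrium} together with the natural boundary conditions, integration by parts on each element shows the consistency error is in fact \emph{zero}: both $\bilA(\sigmat,\taut_h) + \bilB(\taut_h,\uv)$ and $\bilB(\sigmat,\vv_h)$ reproduce the continuous right-hand sides for every discrete test function. If a genuinely nonzero consistency term remains (e.g. from the nonconforming treatment of $\tauv_{\nv\tang}$ on $\Gamma_N$ against the given traction, where $\tv_{N,\tang}$ need not be continuous), I would bound it by a face-residual times $h_F^{1/2}$ against the trace of $\vv_h$, absorb one power of $h$ via a scaled trace/inverse inequality on $\spaceV_h$, and recover the optimal $h_T^s$ rate from the regularity of $\sigmat$. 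Finally, assembling (i)--(iii) and applying the triangle inequality $\|\sigmat - \sigmat_h\|_{\spaceSigma_h} \leq \|\sigmat - \IntSigma\sigmat\|_{\spaceSigma_h} + \|\IntSigma\sigmat - \sigmat_h\|_{\spaceSigma_h}$ (and likewise for $\uv$) gives the claimed bound.
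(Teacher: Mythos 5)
Your overall skeleton (triangle inequality with the interpolants, discrete stability to control $\IntSigma\sigmat-\sigmat_h$ and $\IntV\uv-\uv_h$, and the observation that the smooth exact solution produces no genuine consistency residual because interior face contributions cancel thanks to the continuity of $u_n$, $\uv_\tang$, $\tau_{h,nn}$) is the same route the paper takes, and the ``consistency $=0$'' observation is indeed what the paper exploits implicitly via $\Atel\sigmat=\epst(\uv)$ and $\bilB(\sigmat,\vv_h)=\bilB(\sigmat_h,\vv_h)$. The gap is in your step (i): you assert that discrete kernel-coercivity, discrete inf--sup and ``continuity of both forms'' yield the abstract quasi-optimality with the interpolation errors measured in $\|\cdot\|_{\spaceSigma_h}$ and $\|\cdot\|_{\Hv(\opcurl)}$. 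After testing the error equations you must bound the mixed term $\bilB(\taut_h,\uv-\IntV\uv)$, whose second argument is \emph{not} a finite element function. The continuity result available (Lemma~\ref{lemma:bcont}) holds only on $\spaceSigma_h\times\spaceV_h$ (extended to piecewise smooth first arguments); its proof uses the discrete regular decomposition, inverse inequalities and the Cl\'ement operator, and it does not extend to general second arguments in $\Hv(\opcurl)$ --- this is exactly where the nonconformity in $\tauv_{\nv\tang}$ bites. So the estimate $\bilB(\taut_h,\uv-\IntV\uv)\leq c\,\|\taut_h\|_{\spaceSigma_h}\|\uv-\IntV\uv\|_{\Hv(\opcurl)}$ that your abstract bound presupposes is not available.

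The paper circumvents this by bounding that term directly from the elementwise representation \eqref{eq:evaluatediv_2}: the interior facet terms are rewritten using the continuity of $u_n$ (adding $u_n$ as a vanishing jump), giving
\begin{equation*}
\bilA(\IntSigma\sigmat,\taut_h)+\bilB(\taut_h,\IntV\uv)\;\leq\; c\left(\|\IntSigma\sigmat-\sigmat\|_{\Lt^2}+\|\uv-\IntV\uv\|_{\Hv^1,h}\right)\|\taut_h\|_{\Lt^2},
\end{equation*}
i.e.\ the displacement interpolation error is measured in the \emph{broken} $\Hv^1$ norm, not in $\Hv(\opcurl)$. This requires the separate broken-$\Hv^1$ interpolation estimate for the N\'ed\'elec interpolant $\IntV$ (recalled from \cite{Sinwel:09}), and it is the reason the final bound involves $\|\uv\|_{\Hv^{s+1}(T)}$ rather than only $\|\uv\|_{\Hv^s(T)}+\|\opcurl\uv\|_{\Hv^s(T)}$. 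For the other consistency term your plan is fine: $\bilB(\IntSigma\sigmat-\sigmat_h,\vv_h)=\bilB(\IntSigma\sigmat-\sigmat,\vv_h)$ has a piecewise smooth, normal-normal continuous first argument and a discrete second argument, so the generalized continuity of Lemma~\ref{lemma:bcont} applies and gives $c\,\|\IntSigma\sigmat-\sigmat\|_{\spaceSigma_h}\|\vv_h\|_{\Hv(\opcurl)}$. Your fallback discussion about a possible residual from $\tv_{N,\tang}$ is not needed for the a-priori estimate (with $\spaceV_h\subset\spaceV$ and a smooth exact solution the residuals vanish), but without the broken-norm continuity argument above your proof does not close.
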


\begin{proof}
We divide the approximation error into two parts, the interpolation error and a consistency term. To this end, we add and subtract the interpolants $\IntSigma \sigmat$, $\IntV \uv$ and use the triangle inequality,
\begin{align}
\|\sigmat - \sigmat_h\|_{\spaceSigma_h} + &\|\uv - \uv_h\|_{\Hv(\opcurl)} \leq\\
& (\|\sigmat - \IntSigma \sigmat\|_{\spaceSigma_h} + \|\uv - \IntV\uv\|_{\Hv(\opcurl)}) +\label{eq:interperror}\\
& (\|\IntSigma\sigmat - \sigmat_h\|_{\spaceSigma_h} + \|\IntV \uv - \uv_h\|_{\Hv(\opcurl)}) \label{eq:consistencyerror}
\end{align}

We refer to the terms in \eqref{eq:interperror} as interpolation error, while the terms in \eqref{eq:consistencyerror} are referred to as consistency error. 
We first elaborate on the consistency error, which can itself be bounded by the interpolation error:

Due to the discrete stability, we have
\begin{align}
c& (\|\IntSigma\sigmat - \sigmat_h\|_{\spaceSigma_h} + \|\IntV \uv - \uv_h\|_{\Hv(\opcurl)}) \\
&\leq \sup_{\taut_h \in \spaceSigma_h} \frac{\bilA(\IntSigma\sigmat - \sigmat_h, \taut_h) + \bilB(\taut_h, \IntV\uv - \uv_h)}{\|\taut_h\|_{\spaceSigma_h}} + \sup_{\vv_h \in \spaceV_h} \frac{\bilB(\IntSigma\sigmat - \sigmat_h, \vv_h)}{\|\vv_h\|_{\Hv(\opcurl)}}\\
&= \sup_{\taut_h \in \spaceSigma_h} \frac{\bilA(\IntSigma\sigmat, \taut_h)  + \bilB(\taut_h, \IntV\uv)}{\|\taut_h\|_{\spaceSigma_h}} + \sup_{\vv_h \in \spaceV_h} \frac{\bilB(\IntSigma\sigmat - \sigmat_h, \vv_h)}{ \|\vv_h\|_{\Hv(\opcurl)}} \label{eq:consistencyerrorII}
\end{align}

We proceed for the first term, adding and subtracting $\int_\Omega \Atel \sigmat: \taut_h\, d\xv$ and using that the solution $\uv$ is sufficiently smooth to have $\epst(\uv) = \Atel \sigmat$ in $\Lt^2$,
\begin{eqnarray}
\lefteqn{\bilA(\IntSigma\sigmat, \taut_h)  + \bilB(\taut_h, \IntV\uv)} \\
&=& \int_{\Omega} \Atel(\IntSigma\sigmat-\sigmat):\taut_h\, d\xv  +\\
&& \sum_{T \in \triang} \left( \int_T (\Atel\sigmat - \epst(\IntV\uv)):\taut_h\,d\xv + \int_{\partial T} (\IntV \uv)_n \tau_{h,nn}\, ds \right)\\
&=& \int_{\Omega} \Atel(\IntSigma\sigmat-\sigmat):\taut_h\, d\xv  +\\
&& \sum_{T \in \triang} \left(\int_T (\epst(\uv)-\epst(\IntV\uv)):\taut_h\,d\xv - \int_{\partial T} (u_n - (\IntV \uv)_n) \tau_{h,nn}\, ds\right) \label{eq:94}\\
&\leq& c (\|\IntSigma\sigmat - \sigmat\|_{\Lt^2(\Omega)}\|\taut_h\|_{\Lt^2(\Omega)} +
	\|\uv - \IntV\uv\|_{\Hv^1(\Omega),h} \|\taut_h\|_{\Lt^2(\Omega)}  ).
\end{eqnarray}
In line \eqref{eq:94}, we used that the solution $\uv$ is continuous, and thus $u_n$ can be added to the surface integrals as a vanishing jump term.
Thus, we have reduced the first term of the consistency error \eqref{eq:consistencyerrorII} to the interpolation error,
\begin{eqnarray}
\lefteqn{\sup_{\taut_h \in \spaceSigma_h} \frac{\bilA(\IntSigma\sigmat, \taut_h)  + \bilB(\taut_h, \IntV\uv)}{\|\taut_h\|_{\spaceSigma_h}}}\\
 &\leq& c \left(\|\IntSigma\sigmat - \sigmat\|_{\Lt^2(\Omega)} + \|\uv - \IntV\uv\|_{\Hv^1,h} \right)\\
&\leq& c \left( \sum_{T \in \triang} h_T^{2s} \left(
		\|\uv\|_{\Hv^{s+1}(T)}^2 + \|\sigmat\|_{\Ht^s(T)}^2 \right) \right)^{1/2}.
\end{eqnarray}

We proceed to the second term in \eqref{eq:consistencyerrorII}.
We know that both $\sigmat$ and $\sigmat_h$ satisfy the equilibrium condition \eqref{eq:spp2} with test function $\vv_h \in \spaceV_h \subset \spaceV$, and thus $\bilB(\sigmat,\vv_h) = \bilB(\sigmat_h, \vv_h)$.
 We deduce
\begin{equation}
\bilB(\IntSigma\sigmat -\sigmat_h, \vv_h) = \bilB(\IntSigma\sigmat -\sigmat, \vv_h) = \langle \opdivv(\IntSigma\sigmat - \sigmat), \vv_h\rangle.
\end{equation}
We use Lemma~\ref{lemma:bcont} to show that $\langle \opdivv(\IntSigma\sigmat - \sigmat), \vv_h\rangle$ is bounded with respect to the discrete norms, as we assume $\sigmat$ to be piecewise smooth normal-normal continuous,
\begin{equation}
\langle \opdivv(\IntSigma\sigmat - \sigmat), \vv_h\rangle \leq c \|\IntSigma\sigmat - \sigmat\|_{\spaceSigma_h}\|\vv_h\|_{\Hv(\opcurl)}.
\end{equation}
Thus, the second term of the consistency error \eqref{eq:consistencyerrorII} can be bounded by
\begin{align}
\sup_{\vv_h \in \spaceV_h} \frac{\bilB(\IntSigma\sigmat -\sigmat_h, \vv_h)}{ \|\vv_h\|_{\Hv(\opcurl)}} & \leq c \|\IntSigma\sigmat - \sigmat\|_{\spaceSigma_h} \\
&\leq c  \left( \sum_{T \in \triang} h_T^{2s} \|\sigmat\|_{\Ht^s(\Omega)}^2 \right)^{1/2} .
\end{align}

Together with the interpolation error estimates we arrive at the desired results.
\end{proof}


\bibliographystyle{plain}      
\bibliography{TDNNS_NaturalNorms_bib}   

\end{document}